\newtheorem{example}{Example}
\title{A Unification of Weighted and Unweighted Particle Filters\thanks{Submitted to the editors November 26, 2020; last revision March 1, 2022.
\funding{This research was funded by the Swiss National Science Foundation (SNSF) grants PP00P3\_179060 and 31003A\_175644.}}}
\author{
    Ehsan Abedi\thanks{University of Bern, Department of Physiology, 3012, Bern, Switzerland (\href{mailto:ehsan.abedi@alumni.epfl.ch}{ehsan.abedi@alumni.\allowbreak epfl.ch}, \email{simone.surace@unibe.ch}}).
    \and 
    Simone Carlo Surace\footnotemark[2]
    \and
    Jean-Pascal Pfister\footnotemark[2] \thanks{UZH/ETH Zurich, Institute of Neuroinformatics, 8057, Zurich, Switzerland, and University of Bern, Department of Physiology, 3012, Bern, Switzerland (\email{jeanpascal.pfister@unibe.ch}).}
    }
\begin{document}
\maketitle

\begin{abstract}
    Particle filters (PFs), which are successful methods for approximating the solution of the filtering problem, can be divided into two types: weighted and unweighted PFs.
    It is well known that weighted PFs suffer from the weight degeneracy and curse of dimensionality.
    To sidestep these issues, unweighted PFs have been gaining attention, though they have their own challenges.
    The existing literature on these types of PFs is based on distinct approaches.
    In order to establish a connection, we put forward a framework that unifies weighted and unweighted PFs in the continuous-time filtering problem.
    We show that the stochastic dynamics of a particle system described by a pair process, representing particles and their importance weights, should satisfy two necessary conditions in order for its distribution to match the solution of the Kushner--Stratonovich equation.
    In particular, we demonstrate that the bootstrap particle filter (BPF), which relies on importance sampling, and the feedback particle filter (FPF), which is an unweighted PF based on optimal control, arise as special cases from a broad class and that there is a smooth transition between the two.
    The freedom in designing the PF dynamics opens up potential ways to address the existing issues in the aforementioned algorithms, namely weight degeneracy in the BPF and gain estimation in the FPF.
\end{abstract}

\begin{keywords}
nonlinear filtering, Kushner--Stratonovich equation, Fokker--Planck equation, stochastic differential equations, McKean--Vlasov processes, interacting particle systems, importance sampling
\end{keywords}

\begin{AMS}
60G35, 60H15, 65C35, 65C05, 35Q84
\end{AMS}

\section{Introduction}\label{sec:introduction}
\subsection{Filtering problem}\label{subsec:problem}
    The goal of filtering is to dynamically estimate a latent variable from noisy observations.
    We consider the continuous-time nonlinear filtering problem (in one dimension for the sake of simplicity), where $X_t \in \mathbb{R}$ is the \emph{hidden} process satisfying an Itô stochastic differential equation (SDE) and $Y_t \in \mathbb{R}$ is the \emph{observation} process evolving according to an Itô SDE which depends on $X_t$:
    \begin{align}
        dX_t &=  f(X_t,t)\,dt + g(X_t,t)\, dB_t^{\text{\tiny X}}, \quad X_0 \sim P_0, \label{eq:hidden}\\
        dY_t &=  h(X_t,t)\,dt +  \,dB_t^{\text{\tiny Y}}, \label{eq:observation}
    \end{align}
    where $B_t^{\text{\tiny X}} , B_t^{\text{\tiny Y}} \in \mathbb{R}$ are independent Brownian motions (BMs) and $f(x,t)$, $g(x,t)$, and $h(x,t)$ are (known) functions that map $\mathbb{R}\times\mathbb{R}_{\geq 0}\mapsto\mathbb{R}$ and are called the drift, diffusion, and observation function, respectively.
    The initial condition $X_0$ is independent of the BMs and has (known) distribution $P_0$ with finite second moment and $Y_0=0$ (no observation at first).
    Let $\mathcal{B}(\mathbb{R})$ be the Borel $\sigma$-algebra on $\mathbb{R}$.
    The filtering problem is to find the (regular) conditional distribution of the hidden process $X_t$ given the history of observations $\mathcal{F}^{Y}_t := \sigma ({Y}_s : 0 \leq s \leq t)$, that is, $P_t(B) := \mathrm{Pr}(X_t \in B|\mathcal{F}^{Y}_t) $ for any Borel subset $B \in \mathcal{B}(\mathbb{R})$. 
    This distribution is referred to as the \emph{filtering distribution} and in the statistics literature it is also called the \emph{posterior distribution}, while $\mathrm{Pr}(X_t\in B)$ is called the \emph{prior distribution}. 
    We let $p(x,t)$ denote the probability density function corresponding to $P_t$ with respect to the Lebesgue measure, if it exists.
    
    Throughout the paper we assume that 
    $f,g$ satisfy the conditions for the well-posedness of SDEs (e.g., locally Lipschitz in $x$ uniformly in $t$; see, e.g., \cite[Theorem 5.4]{Klebaner2005}).
    In addition, to ensure the existence of the filtering density $p$, we either assume that $g^2\geq\delta>0$ is bounded below by a constant and that $\smash{f \in C^{1,0}_b}$, $\smash{g^2 \in C^{2,0}_b}$, and $\smash{h \in C^{1,0}_b}$, or we consider the linear-Gaussian case.
    Here $\smash{C^{\kappa,j}_b}$ denotes the space of bounded continuous functions with bounded partial derivatives up to order $\kappa$ in $x$ and $j$ in $t$ (see \cite[Theorem 7.11 and 7.17]{Bain2009} for details on existence, uniqueness, and smoothness of $p$).
    
    The stochastic processes in this paper are defined on a filtered probability space $(\Omega,\mathcal{F},\mathbb{P},(\mathcal{F}_t)_{t\geq 0})$ (satisfying the usual conditions) and are assumed to be progressively measurable (hence adapted) with respect to the filtration 
    $(\mathcal{F}_t)_{t\geq 0}$.
    We further denote by $\mathbb{L}^\kappa(0,T)$ the space of processes $(F_t)_{T\geq t\geq 0}$ with
    $\smash{\mathbb{E} [\int_{0}^{T}|F_t|^\kappa dt] < \infty}.$
    All SDEs are in the Itô sense, and primes ($'$) denote the (partial) derivative with respect to $x$.
    
\subsection{Formal solution}\label{subsec:solution}
    It is well known that the evolution of $p$ is described by the Kushner--Stratonovich equation (KSE) \cite{Kushner1964,Stratonovich1960}
	\begin{equation}\label{eq:KSE_dis}
		dp(x,t) =
		\mathcal{L}_t^{\dagger}p(x,t)\,dt + p(x,t) \big(h(x,t) - \hat{h}_t\big) (dY_t- \hat{h}_tdt),
	\end{equation}
    where $\smash{\mathcal{L}_t^{\dagger}\cdot = - \frac{\partial}{\partial x}(f(x,t)\cdot) + 
    \frac{1}{2} \frac{\partial^2}{\partial x^2}(g^2(x,t) \cdot)}$ is the adjoint Fokker--Planck operator and $\hat{h}_t : = \mathbb{E}[h(X_t,t) | \mathcal{F}^{Y}_t] = \int_{\mathbb{R}} h(x,t) p(x,t) dx $.
    The initial condition is assumed to be $p(x,0)=p_0(x) \in C^2$.
    The KSE consists of two parts: the first part is associated with the prior dynamics given by the Fokker--Planck equation, while the second part can be interpreted as a correction resulting from observations, which is proportional to the so-called \emph{innovation} term $(dY_t- \hat{h}_tdt)$. Equation \cref{eq:KSE_dis} can be equivalently converted into the evolution of a given statistic using integration by parts. Let $\phi\in C^2$ such that $\mathbb{E}[|\phi(X_t)|]<\infty$ for all $t\geq 0$. The conditional expectation of $\phi(X_t)$ evolves as 
    \begin{equation}\label{eq:KSE_mom}
    	d\mathbb{E}[\phi(X_t) | \mathcal{F}^{Y}_t] =
    	\mathbb{E} [\mathcal{L}_t\phi(X_t) | \mathcal{F}^{Y}_t]\,dt +
    	\mathbb{E} [ \phi(X_t) (h(X_t,t) - \hat{h}_t) | \mathcal{F}^{Y}_t ] (dY_t-\hat{h}_t dt),
    \end{equation}
    where $\smash{ \mathcal{L}_t \cdot = f(x,t) \frac{\partial}{\partial x} \cdot + \frac{1}{2}g^2(x,t) \frac{\partial^2}{\partial x^2} \cdot}$ is the generator of the process $X_t$.
    
    \begin{example}[linear-Gaussian case]
    We will use this simple case throughout the paper as an illustration of key concepts.
    The linear-Gaussian case is characterized by linear drift terms and additive noise, as well as a Gaussian initial distribution:
    \begin{equation}\label{eq:linear_problem}
        f(x,t) = ax, \quad g(x,t) = b, \quad h(x,t) = c x, \qquad P_0 = \mathcal{N}(\mu_0,\rho_0),
    \end{equation}
    where $a,b,c \in \mathbb{R}$.
    In this case, the KSE \cref{eq:KSE_dis} can be solved in closed form.
    We have $ X_t|\mathcal{F}^Y_t \sim \mathcal{N}(\hat{\mu}_t,\hat{\rho}_t )$ given by the Kalman-Bucy \cite{Kalman1961} classical result 
	\begin{align}
	d\hat{\mu}_t & =  a\hat{\mu}_t\,dt + c\hat{\rho}_t (dY_t - c\hat{\mu}_t\,dt ),    \label{eq:posterior_mean} \\
	d\hat{\rho}_t & = b^2\,dt + 2a\hat{\rho}_t\,dt -c^2 \hat{\rho}_t^2\,dt.  \label{eq:posterior_var}
	\end{align}
    The coefficient of the innovation term in \cref{eq:posterior_mean} is called the Kalman gain $\bar{K}(t):= c\hat{\rho}_t$.
    \end{example}
    
    In contrast to the linear-Gaussian case, for most signal and observation models \cref{eq:KSE_dis} does not have closed-form solutions. 
    Likewise, \cref{eq:KSE_mom}, when it is applied to the moments of the filtering distribution, gives rise to a closure problem in which the evolution of the $n$th moment $\mathbb{E}[X_t^n | \mathcal{F}^{Y}_t]$ generally depends on higher-order moments.
    Therefore, the KSE is only a “formal” solution to the filtering problem and needs to be approximated numerically in practice. 
    Among the numerical methods, particle filters (PFs) have been widely and successfully applied because of their versatility.
    Below, they will be presented within a broader framework, but see e.g., \cite[section 8.6]{Bain2009}, , and the tutorials \cite{Doucet2009,Kutschireiter2020} for more detailed surveys of the PFs.

\subsection{Particle filters}\label{subsec:PFs}
    These methods are aimed at approximating the filtering distribution by the empirical distribution of a particle system, which in full generality is a triangular array of random variables \cite{Chopin2004} for some fixed $t$,
    \begin{equation}\label{eq:particle_system}
        \{S_t^{(i,N)}, w_t^{(i,N)}\}_{i=1}^{N} \quad \text{with}\,  \sum\nolimits_{i=1}^{N} w_t^{(i,N)} = 1,
    \end{equation}
    where $S_t^{(i,N)}$ are samples, also called \emph{particles}, and $w_t^{(i,N)}$ are their corresponding \emph{importance weights}, which without loss of generality are assumed to be normalized. 
    To justify such a method, one has to study the $N$-particle system and show that the sequence of empirical distributions converges (at least in a weak sense) to the filtering distribution as $N \to \infty$. 
    In this article, however, we use a mean-field-limit approach; i.e., we study abstract “particle systems” characterized by a \emph{pair process} denoted by $(S_t,W_t)$. 
    The following definition is modified from \cite[section 2.1]{Chopin2004}.
    \begin{definition}[targeting condition]\label{def:targets}
        The particle system described by a pair process $(S_t,W_t)$, representing particles and their weights respectively, is said to target the filtering distribution in the filtering problem \cref{eq:hidden}--\cref{eq:observation} at time $t$ if and only if
        \begin{equation}\label{eq:targets}
        	\mathbb{E}[W_t \phi(S_t) | \mathcal{F}^{Y}_t] = \mathbb{E}[\phi(X_t) | \mathcal{F}^{Y}_t] \quad \text{a.s.}
        \end{equation}
        holds for any measurable function $\phi(x):\mathbb{R}\mapsto\mathbb{R}$ such that $\mathbb{E}[|\phi(X_t)|]<\infty$.
    \end{definition}
    
    It should be noted that weights for a given $S_t$ are \emph{not unique}. For instance, if the pair $(S_t,W_t)$ satisfies the targeting condition, then any pair $(S_t,W_t+V_t)$ is also a solution, where $V_t$ is a stochastic process independent of $S_t$ with zero conditional mean and finite second moment. In \cref{subsec:weights}, we further clarify this nonuniqueness. 
    
    To have the targeting condition over a period of time, i.e., a dynamic version of \cref{eq:targets}, we are interested in the time-evolution of $(S_t,W_t)$.  
    As we shall see later in \cref{subsec:unifying}, this naturally leads to \emph{McKean--Vlasov} SDEs, in which the coefficients in the dynamics of $(S_t,W_t)$ become dependent on the targeted distribution. 
    In numerical implementations, the left-hand side of \cref{eq:targets} is approximated by a Monte Carlo estimate $\smash{\frac{1}{M} \sum_{i=1}^{N} W_t^{(i,N)}\phi(S_t^{(i,N)})}$ using samples $(S_t^{(i,N)},W_t^{(i,N)})$, $i=1,\ldots,N$, from the joint distribution of $(S_t,W_t)$ given $\mathcal{F}^{Y}_t$, where $M :=\sum_{i=1}^N W_t^{(i,N)}$, which is of order $N$.
    If the samples are chosen appropriately (e.g., i.i.d.), the Monte Carlo estimate converges to the conditional expectation by a law of large numbers and yields asymptotic consistency of the PF.
    The normalized weights are then given by $\smash{w_t^{(i,N)}=\tfrac{1}{M} W_t^{(i,N)}}$. 
    Thus, in theory, if we combine \cref{def:targets} with appropriate sampling, we obtain an asymptotically exact filter.
    In practice, however, propagation of the samples requires the McKean--Vlasov terms to be estimated based on the current sample.
    This estimation problem is nontrivial and introduces correlations between samples, which complicates the convergence analysis.
    Although interesting and worthwhile, the issues of estimation and convergence are not within the scope of the present article.
    Instead, we focus on the characterization of abstract particle systems.
    
    A particle filter is said to be \emph{unweighted} if $W_t=1$ for all $t$; otherwise, it is called \emph{weighted}. 
    Observe that in particular, \cref{eq:targets} implies (by setting $\phi=1$) that $\mathbb{E}[W_t|\mathcal{F}^{Y}_t]=1$ a.s. and due to the nonnegativity of the variance of $W_t$, we also have $\mathbb{E}[W_t^2|\mathcal{F}_t^Y] \geq 1$ a.s. 
    If $W_t$ deviates significantly from unity, this means that the Monte Carlo variance of the weighted average is larger than it would be for an average with equal weights.
    This can be measured by the \emph{effective sample size}, a commonly used approximation of which (see, e.g., \cite[section 2]{Martino2017}) is given by
        \begin{equation}\label{eq:Neff}
	        N_{\text{eff}} : =  \Big( \sum\nolimits_{i=1}^{N}  {(w_t^{(i,N)}})^2 \Big)^{-1}.
	    \end{equation}
	 Since $\smash{w_t^{(i,N)}=\frac{1}{M} W_t^{(i,N)}}$ as explained before, by a law of large numbers we have $M/N\to 1$ as $N\to\infty$ and therefore
	    \begin{equation}
	        \frac{N}{N_{\text{eff}}}=\Big(\frac{N}{M}\Big)^2\frac{1}{N}\sum\nolimits_{i=1}^N \Big(W_t^{(i,N)}\Big)^2\, \xrightarrow{p} \, \mathbb{E}[W_t^2|\mathcal{F}_t^Y] \qquad \text{as } N \to \infty,
	    \end{equation}
	provided samples become i.i.d. for large $N$.   
	So to keep $N_{\text{eff}}$ close to $N$, it is desirable that $\text{Var}[W_t|\mathcal{F}_t^Y]=\mathbb{E}[W_t^2|\mathcal{F}_t^Y]-1$ remain close to zero. 
	In order to assess the degeneracy of a PF algorithm, we are essentially interested in minimizing the unconditional variance of the importance weights, $\text{Var}[W_t]=\mathbb{E}[W_t^2]-1$, taking all possible realizations of the observation process into account \cite{Doucet2000}. In \cref{subsec:weights}, we will establish the minimum-variance weight for a fixed particle distribution.
	We now review two well-known examples of PFs within the framework above.
    
    $\bullet$ \textbf{The bootstrap particle filter (BPF)}.
    The BPF is a weighted PF that was originally introduced by \cite{Gordon1993} and is widely used in discrete-time filtering \cite{Doucet2009}.
    Here we present its continuous-time formulation (see, e.g., \cite[chapter 9]{Bain2009} or \cite[section 6.1]{Kutschireiter2020}).
    The particles in this filter move with the same law as the hidden process, thereby being distributed according to the prior.
    The weight dynamics must consequently include observations in such a way that the weighted particles are distributed according to the posterior.
    The evolution of the particle system denoted by $(S^{\text{\tiny B}}_t,W^{\text{\tiny B}}_t)$ reads as
    \begin{align}
        dS^{\text{\tiny B}}_t & = f (S^{\text{\tiny B}}_t,t) \, dt + g (S^{\text{\tiny B}}_t,t) \, dB_t, \label{eq:BPF_dS} \\ 
        dW^{\text{\tiny B}}_t & = W^{\text{\tiny B}}_t (h(S^{\text{\tiny B}}_t,t) - \hat{h}_t) \, (dY_t-\hat{h}_tdt), \label{eq:BPF_dW}
    \end{align}
    with initial condition $S^{\text{\tiny B}}_0 \sim P_0$ and $W^{\text{\tiny B}}_0=1$, and $B_t$ being a BM independent of \{$(B^{\text{\tiny X}}_t)_{t\geq 0}$, $(B^{\text{\tiny Y}}_t)_{t\geq 0}$, $X_0$, $S^{\text{\tiny B}}_0$\} (note that in practice, particles are driven by independent BMs). 
    The usual derivation of the BPF is based on a change of probability measure, in which $W_t^{\text{\tiny B}}$ is the evaluation of the Radon–Nikodym derivative $d\mathbb{P}/d\mathbb{Q}$ with $X_t$ replaced by $S_t^{\text{\tiny B}}$, where $\mathbb{P}$ is the (original) coupled measure of the system $(X_t,Y_t)$ and $\mathbb{Q}$ is a new measure under which $X_t$ and $Y_t$ are independent, the dynamics of $X_t$ remains unchanged, and $Y_t$ corresponds to a BM, $dY_t=dB_t^{\text{\tiny Y}}$ \cite[chapter 9]{Bain2009}:
    \begin{equation} \label{eq:BPF_W}
        W^{\text{\tiny B}}_t=\frac{\tilde W^{\text{\tiny B}}_t}{\mathbb{E}_{\mathbb{Q}}[\tilde W^{\text{\tiny B}}_t|\mathcal{F}^Y_t]}, \quad \tilde W^{\text{\tiny B}}_t:=\mathbb{E}_{\mathbb{Q}}\bigg[\frac{d\mathbb{P}}{d\mathbb{Q}}\bigg|\mathcal{F}_t\bigg].
    \end{equation}
    Girsanov's theorem and It\^o's formula then give rise to \cref{eq:BPF_dW}.
    
    Implementing the dynamics \cref{eq:BPF_dS}--\cref{eq:BPF_dW} in practice is straightforward. 
    However, the BPF suffers from the weight decay; that is, most of the weights become negligibly small and only a few of them remain significant, an issue which becomes even more severe in high dimensions \cite{Surace2019}. 
    A common practice to overcome this issue is to periodically resample the particles, in which case \cref{eq:BPF_dS}--\cref{eq:BPF_dW} merely describe the evolution of the particle system between the resampling times. 
    Although resampling techniques are key ingredients of weighted PFs, they are not the focus of the current work. Instead, we attempt to find ways to alleviate the weight collapse itself. \vspace{1pt}
   
     $\bullet$ \textbf{The feedback particle filter (FPF).}
     The FPF is an unweighted PF that was introduced in \cite{FPF2013}, initially motivated by mean-field optimal control.
     In contrast to the BPF, this filter does not have weight dynamics. 
     Instead, the particles must incorporate the observations and \emph{interact} with each other so that they can target the filtering distribution by themselves.
     The key idea is to add a correction term, also called the control input, to the prior dynamics of particles,
     \begin{align}
        dS^{\text{\tiny F}}_t & = f (S^{\text{\tiny F}}_t,t)\,dt + g (S^{\text{\tiny F}}_t,t)\,dB_t + U(S^{\text{\tiny F}}_t,t)\,dt + K(S^{\text{\tiny F}}_t,t)\,dY_t, \label{eq:FPF_dS} \\
        dW^{\text{\tiny F}}_t & = 0, \label{eq:FPF_dW}
    \end{align}
    and find the unknown functions $U$ and $K$ such that the conditional density of $S_t$ given $\mathcal{F}^{Y}_t$ solves the KSE \cref{eq:KSE_dis}. 
    Here again the initial condition is $S^{\text{\tiny F}}_0 \sim P_0$ and $W^{\text{\tiny F}}_0=1$ and $B_t$ is a BM independent of \{$(B^{\text{\tiny X}}_t)_{t\geq 0}$, $(B^{\text{\tiny Y}}_t)_{t\geq 0}$, $X_0$, $S^{\text{\tiny F}}_0$\}.
    The paper \cite{FPF2013} shows that $U(x,t)$ and $K(x,t)$ under certain technical assumptions must satisfy
    \begin{gather}
        \frac{\partial}{\partial x} ( K p) + ( h - \hat{h}_t ) p = 0, \label{eq:FPF_K}\\
        U =  - \frac{1}{2} K \big( h + \hat{h}_t - \frac{\partial}{\partial x} K \big) \label{eq:FPF_U}.
    \end{gather}
    The main challenge is to find the so-called \emph{gain function} $K$, which in turn depends on $p$. 
    In the multidimensional case, \eqref{eq:FPF_K} does not have uniqueness of solutions because any solution $K$ can generate another solution by adding a divergence-free vector field.
    A commonly used solution is obtained (uniquely) by restricting $K$ to be of gradient form \cite{FPF2016}, where \eqref{eq:FPF_K} then becomes a weighted \textit{Poisson equation}.
    Different assumptions on $K$ is one aspect of how different PFs arise.
    Although we are aware of similar filters, as noted in \cite{Pathiraja2020}, that require solving equations like \eqref{eq:FPF_K} and have been referred to as “particle flow filters,” we refer to algorithm above as the FPF.
    
    \begin{example}[linear-Gaussian case, continued]
    In this case, \eqref{eq:FPF_K}--\eqref{eq:FPF_U} have closed form solutions and the gradient form solution for $K$ equals the Kalman gain.
    \end{example}
    
    While fixing $K$ to be in gradient form is useful to pick a gain in practice and makes the boundary value problem accessible to a range of numerical approximations such as the RKHS method \cite{Radhakrishnan2018}, its necessity is not well justified from a theoretical perspective. 
    This lack of justification is especially striking on smooth manifolds without a Riemannian metric given a priori, where the gradient field depends on the chosen metric \cite{Abedi2019, Surace2020}. 
    One final point to notice is that the FPF does not require any resampling procedure, which is an advantage compared to the BPF. 
    Moreover, \cite{Surace2019} shows numerically that the FPF is less prone to the curse of dimensionality (COD) than the BPF. 
    \cref{table:BPF_vs_FPF} summarizes the comparison between the BPF and FPF. 
    
\begin{table}%[tbhp]
    {\footnotesize
    \begin{center}
    \label{table:BPF_vs_FPF}
    \caption{A comparison between the BPF and the FPF as they have been treated in the literature so far.
    Each has its strengths $(+)$ and weaknesses $(-)$.}
	\begin{tabular}{lll}
		\hline
		Bootstrap particle filter \cite{Gordon1993}
		&
		$\quad$
		&
		Feedback particle filter \cite{FPF2013} \\
		\hline
		a weighted particle filter && an unweighted particle filter\\
		based on change of probability measure && motivated by optimal control\\
		asymptotically exact $(+)$ && asymptotically exact $(+)$\\
		suffers from weight degeneracy $(-)$ && requires gain estimation $(-)$\\
		exhibits the COD $(-)$ && potentially avoids the COD $(+)$\\
		\hline
	\end{tabular}
	\end{center}}
\end{table}

\subsection{Motivation and contribution}
    So far we have seen two well-known PFs, each of which was originally based on a distinct approach, yet they both satisfy the targeting condition (\cref{def:targets}).
    Given the comparison between these methods, it is still unknown whether combining the strengths of both types is possible.
    As a step towards this goal, we provide a unified treatment of a large family of weighted and unweighted PFs that is based solely on the targeting condition.
    Specifically, we do the following:
    \begin{itemize}
        \item We first characterize the weights $W_t$ in terms of a Radon--Nikodym derivative of the marginal distributions over $S_t$ and find the optimal (i.e., minimum-variance) weight $W_t^*$ for fixed particle distribution (see \cref{thm:weights}). This result also sheds some light on the nonuniqueness of importance weights in particle filtering.
    	\item We then introduce a general dynamics for the particle system $(S_t,W_t)$ expressed as a system of SDEs and obtain necessary conditions on its coefficients for targeting the filtering distribution (see \cref{thm:unifying}, referred to as the “unifying theorem”). 
    	The necessary conditions are a system of ordinary differential equations (ODEs) in one dimension, which becomes a system of partial differential equations (PDEs) in higher dimensions.
    	The results also hold in the unconditional case (see \cref{crl:unifying_unconditional}).
    	\item As a result of the unifying theorem, we derive a class of PFs which encompasses the BPF and FPF with a smooth transition between them, thereby indicating that these methods are not different in their nature (see \cref{prop:class}).
    	\item The optimal importance weight $W_t^*$ from the first theorem is studied in the context of the unifying theorem, and its evolution $dW_t^*$ is derived (see \cref{prop:dW_optimal}).
    	\item Finally, we outline two potential applications of the unifying theorem in \cref{sec:applications}, namely compensating for gain estimation errors with weight dynamics and providing freedom to alleviate the weight degeneracy.
    \end{itemize}
    For the sake of simplicity, we develop the results in the one-dimensional setting.
    In principle, the results can be generalized to the multidimensional case; however, further consideration is required, as more freedom emerges in higher dimensions.
    
\subsection{Related work}
    A unifying framework for discrete- and continuous-time filtering (as a Bayesian formulation of the data assimilation problem) from the perspective of couplings, optimal transport, and Schr\"odinger bridges is proposed in \cite{Reich2019}.
    Similarly, in \cite{Pathiraja2020}, three types of unweighted PFs, including the FPF, are unified in the framework of McKean--Vlasov SDEs.
    However, to the best of our knowledge a unification of weighted and unweighted approaches such as this one has not been attempted. 
    This question is also loosely connected to the concept of proposal distributions in discrete-time particle filters.
    From the perspective of proposal distributions in the sequential Monte Carlo (SMC) sampling literature, the BPF can be viewed as the filter for which the proposal is equal to the prior transition density of the hidden state. 
    Nevertheless, it is unclear how to view an optimal proposal distribution as in \cite[section D]{Doucet2000} in relation to continuous-time filters, in particular the FPF.
    It was pointed out in \cite[section 3.2]{Surace2019} that the optimal proposal becomes trivial in the continuous-time limit.
    In the same paper, also the broader question of how to reconcile the dynamics of the FPF with the importance weights as Radon--Nikodym derivatives of path measures was posed. 
    The present paper answers this question by adopting a change-of-measure approach that is a generalization of the path-measure framework adopted in the literature on continuous-time PFs.
    
    This paper also touches on the notion of nonuniqueness in designing PFs, which has been discussed in the literature but mainly restricted to the linear-Gaussian case and freedom of the particle movements.
    For example, \cite{Abedi2019} provides a systematic exploration of the nonuniqueness within the class of linear FPF in terms of gauge transforms and \cite{Taghvaei2020} studies the nonuniqueness of the feedback control law in particle dynamics for different types of ensemble Kalman filters.
    The aforementioned paper \cite{Pathiraja2020} also explores the nonuniqueness in unweighted PFs and gives a general formulation from which existing filters can be obtained as special cases by making specific assumptions on the form of the coefficients. 
    In this work, we examine the nonuniqueness in a more general setting that includes weight dynamics.

\section{Main results}\label{sec:results}
\subsection{A characterization of weights in terms of the Radon--Nikodym derivatives}\label{subsec:weights}
    The approach used to derive the importance weights in the BPF \cref{eq:BPF_W} is difficult to reconcile with filters such as the FPF, in which observation terms are included in the particle dynamics.
    This explicit $dY$ term makes the measure of the particles singular with respect to the measure of the hidden process (see \cite[section 3.2]{Surace2019} for a discussion of this issue).
    Here we demonstrate a general relationship between the process $W_t$ and the Radon--Nikodym derivative of the marginal distributions over $S_t$, which is consistent with both the BPF and FPF, and we also provide the minimum-variance choice for $W_t$ (for a fixed particle distribution).
    
    \begin{theorem}\label{thm:weights}
     Consider the filtering problem \cref{eq:hidden}--\cref{eq:observation} with filtering distribution $P_t(B) := \mathrm{Pr}(X_t \in B|\mathcal{F}^{Y}_t) $ for any $B \in \mathcal{B}(\mathbb{R})$. 
     Let $(S_t, W_t)$ be a pair process characterizing an abstract particle system. 
     Denote by $Q_t$ the conditional distribution of $S_t$ given $\mathcal{F}^Y_t$, i.e., $Q_t(B) := \mathrm{Pr}(S_t \in B|\mathcal{F}^{Y}_t) $. 
     At any time $t$, under the condition that the particle system described by $(S_t,W_t)$ targets $P_t$, according to \cref{def:targets}, we have the following:
    \begin{enumerate}[label=\Roman*.,leftmargin=*]
        \item\label{thm:weights_i}The distribution $P_t$ is absolutely continuous with respect to the distribution $Q_t$.
        In particular, the Radon--Nikodym derivative $dP_t/dQ_t$ exists, and
        \begin{equation}\label{eq:weight_Radon-Nikodym}
        \mathbb{E}[W_t |\mathcal{F}^{Y}_t, S_t] = \frac{dP_t}{dQ_t}(S_t) \quad a.s.
        \end{equation}
        \item\label{thm:weights_ii}$W^*_t := \frac{dP_t}{dQ_t}(S_t)$ mimimizes $\mathbb{E}[W_t^2]$ subject to \cref{eq:weight_Radon-Nikodym}.
    \end{enumerate}
    \end{theorem}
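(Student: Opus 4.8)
The plan is to work directly from the targeting condition of \cref{def:targets}, which, at a fixed time $t$, says that
\[
\mathbb{E}[W_t\, f(S_t)\mid\mathcal{F}^Y_t] \;=\; \int_{\mathbb{R}} f\,dP_t
\]
almost surely for every bounded measurable $f:\mathbb{R}\to\mathbb{R}$; taking $f\equiv 1$ and using $P_t(\mathbb{R})=1$ also gives the normalization $\mathbb{E}[W_t\mid\mathcal{F}^Y_t]=1$, which (together with the integrability of $W_t$) is all that is needed for the conditional-expectation objects below to exist. Because $S_t$ is $\mathbb{R}$-valued, there is a regular version $Q_t(\omega,\cdot)$ of the conditional law of $S_t$ given $\mathcal{F}^Y_t$ and, by the same token, an $\mathcal{F}^Y_t\otimes\mathcal{B}(\mathbb{R})$-measurable map $g_t(\omega,s)$ with $g_t(\cdot,S_t)=\mathbb{E}[W_t\mid\mathcal{F}^Y_t,S_t]$ a.s.; I would introduce these at the outset.

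For part \cref{thm:weights_i}, I would apply the tower property in the form
\[
\mathbb{E}[W_t f(S_t)\mid\mathcal{F}^Y_t] \;=\; \mathbb{E}\big[\mathbb{E}[W_t\mid\mathcal{F}^Y_t,S_t]\,f(S_t)\mid\mathcal{F}^Y_t\big] \;=\; \int_{\mathbb{R}} g_t(\omega,s)\, f(s)\,Q_t(\omega,ds),
\]
and then compare with the targeting identity to get $\int_B g_t(\omega,s)\,Q_t(\omega,ds) = P_t(\omega,B)$ for every Borel $B$, almost surely. Picking a countable generating algebra and a $\pi$--$\lambda$ (uniqueness of measures) argument upgrades this to: for a.e.\ $\omega$, the two finite measures $B\mapsto P_t(\omega,B)$ and $B\mapsto\int_B g_t(\omega,\cdot)\,dQ_t(\omega,\cdot)$ coincide. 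Hence $P_t(\omega,\cdot)\ll Q_t(\omega,\cdot)$ with density $g_t(\omega,\cdot)$, and evaluating this density at $s=S_t(\omega)$ yields $dP_t/dQ_t(S_t)=\mathbb{E}[W_t\mid\mathcal{F}^Y_t,S_t]$ a.s., which is \cref{eq:weight_Radon-Nikodym}. The only delicate point is that $P_t$ and $Q_t$ are $\mathcal{F}^Y_t$-conditional, hence random, measures, so ``absolute continuity'' and ``Radon--Nikodym derivative'' must be read $\omega$-by-$\omega$; the joint measurability of $g_t$ is what guarantees that the density, and thus $W^*_t=g_t(S_t)$, is a genuine measurable process eligible to serve as a weight in part \cref{thm:weights_ii}.

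Part \cref{thm:weights_ii} is then a short conditional-Jensen argument. Since $W^*_t$ is $\sigma(\mathcal{F}^Y_t,S_t)$-measurable it trivially satisfies \cref{eq:weight_Radon-Nikodym}, and for any other $W_t$ satisfying that constraint,
\[
\mathbb{E}[W_t^2] \;=\; \mathbb{E}\big[\mathbb{E}[W_t^2\mid\mathcal{F}^Y_t,S_t]\big] \;\ge\; \mathbb{E}\big[\big(\mathbb{E}[W_t\mid\mathcal{F}^Y_t,S_t]\big)^2\big] \;=\; \mathbb{E}[(W^*_t)^2],
\]
with equality precisely when $W_t$ is itself $\sigma(\mathcal{F}^Y_t,S_t)$-measurable, so $W^*_t$ is a minimizer. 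I expect essentially all the real work to sit in part \cref{thm:weights_i}, and specifically in the disintegration bookkeeping --- choosing regular/jointly measurable versions, handling the $B$-dependent null sets uniformly, and making precise the sense in which $dP_t/dQ_t$ exists for conditional measures --- rather than in any nontrivial estimate, since part \cref{thm:weights_ii} reduces to the elementary inequality $\mathbb{E}[Z^2\mid\mathcal G]\ge(\mathbb{E}[Z\mid\mathcal G])^2$.
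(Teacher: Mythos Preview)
Your proposal is correct and follows essentially the same route as the paper: Part~I applies the tower property and then tests against indicators $\phi=\mathds{1}_B$ to identify $\mathbb{E}[W_t\mid\mathcal{F}^Y_t,S_t]$ as the Radon--Nikodym density of $P_t$ with respect to $Q_t$, with your handling of regular conditional versions and the uniform treatment of $B$-dependent null sets being more explicit than the paper's. For Part~II you invoke conditional Jensen directly, whereas the paper states a separate lemma proving the equivalent Pythagorean identity $\mathbb{E}[W^2]=\mathbb{E}[(W-W^*)^2]+\mathbb{E}[(W^*)^2]$ (with $\mathcal{G}=\sigma(\mathcal{F}^Y_t,S_t)$) and then minimizes the first term; both arguments are the same $L^2$-projection fact.
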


    The proof is given in \cref{prf:weights}, and by reversing the arguments in the proof it is obvious that \eqref{eq:weight_Radon-Nikodym} is already sufficient to guarantee the targeting condition \cref{eq:targets}.
    Notice also that if $P_t$ and $Q_t$ have densities $p(x,t)$ and $q(x,t)$ with respect to the Lebesgue measure, then \cref{eq:weight_Radon-Nikodym} turns into
    \begin{equation}\label{eq:weight_Radon-Nikodym_density}
        \mathbb{E}[W_t |\mathcal{F}^{Y}_t, S_t] = \frac{p(S_t,t)}{q(S_t,t)} \quad a.s.
    \end{equation}
    
    As we observed in \cref{subsec:PFs}, the importance weights are not unique.
    It is now also evident from \cref{eq:weight_Radon-Nikodym} that there are many solutions which are not necessarily optimal.
    A large class of suboptimal choices for $W_t$ is of the form $(d\tilde P_t/d\tilde Q_t)(Z_t,S_t)$, where $\tilde P_t$ and $\tilde Q_t$ are respectively the joint conditional (on $\mathcal{F}^Y_t$) distributions of $(Z_t,X_t)$ and $(Z_t,S_t)$ with $Z_t$ being some arbitrary $\mathcal{F}_t$-measurable process such that the Radon--Nikodym derivative exists, and the second marginals of $\tilde P_t$ and $\tilde Q_t$ agree, respectively, with $P_t$ and $Q_t$.
    For example, we can let $Z_t=S_0$, which takes some information from the past into account.
    Also the weights in the BPF \cref{eq:BPF_W} are of this form, as by the disintegration of measures and independence of $(X_t)_{t\geq 0}$ and $(Y_t)_{t\geq 0}$ under the measure $\mathbb{Q}$,\footnote{i.e., $\mathbb{P}\big |_{\mathcal{F}^Y_t\vee\sigma(X_t)}=\mathbb{P}\big |_{\mathcal{F}^Y_t}\otimes\mathbb{P}_{X_t|\mathcal{F}^Y_t}$ and $\mathbb{Q}\big |_{\mathcal{F}^Y_t\vee\sigma(X_t)}=\mathbb{Q}\big |_{\mathcal{F}^Y_t}\otimes\mathbb{Q}\big |_{\sigma(X_t)}$.} we have $\mathbb{E}_{\mathbb{Q}}[\tfrac{d\mathbb{P}}{d\mathbb{Q}}|\mathcal{F}_t]= \mathbb{E}_{\mathbb{Q}}[\tfrac{d\mathbb{P}}{d\mathbb{Q}}|\mathcal{F}^Y_t]\times\tfrac{dP_t}{dQ_t}(X_t)$, and hence the conditional expectation of the (normalized) Radon-Nikodym derivative of the full measures (after replacing $X_t$ by $S^{\text{\tiny B}}_t$) reduces to $\smash{\tfrac{dP_t}{dQ_t}(S^{\text{\tiny B}}_t)}$.
    
    In the FPF, we have by construction that $P_t=Q_t$ and therefore $W^{\text{\tiny F}}_t=1$, which is also the optimal choice---in fact, it is globally optimal as it achieves $\text{Var}[W^{\text{\tiny F}}_t]=0$.
    Accordingly, this result reconciles the BPF with the FPF and all other filters for which $W_t$ as a pathwise Radon--Nikodym derivative does not make sense.
    \cref{thm:weights} says that the conditional expectation of $W_t$ can always be interpreted as a Radon--Nikodym derivative and is a version of (i.e., a.s. equal to) the density of the filtering distribution $P_t$ with respect to the particle distribution $Q_t$.
    It also shows that in the BPF, $W^{\text{\tiny B}}_t$ is different from the optimum $W^*_t$ given that we fix the particle dynamics to the prior dynamics.
    This is not surprising in view of the well-known degeneracy problem of the BPF (see \cite{Surace2019} and the references therein).
    
\subsection{The unifying theorem}\label{subsec:unifying}
    Considering the pair process $(S_t,W_t)$, which represents particles and their importance weights, respectively, as explained in \cref{subsec:PFs}, we are trying to find conditions on the dynamics of $(S_t,W_t)$ that allow the particle system to target the filtering distribution, as defined in \cref{def:targets}. 
    Here we make an ansatz that the stochastic dynamics of the particle system takes the following general form:
    \begin{align}
        dS_t & = u (S_t,t)\,dt + k(S_t,t)\,dY_t + v(S_t,t)\, dB_t, \label{eq:HPF_dS}\\
        dW_t & = W_t \big( \gamma (S_t,t)\,dt + \varepsilon(S_t,t)\,dY_t + \zeta(S_t,t)\,dB_t \big), \label{eq:HPF_dW}
    \end{align}
    where $B_t$ is a BM. From this starting point, our goal is to find the conditions that the unknown functions $\{u,k,v,\gamma,\varepsilon,\zeta\}$ should satisfy. 
    To derive the results, certain assumptions are required as listed below:
    \begin{enumerate}[label=(\roman*),leftmargin=*]
        \item\label{asmp:SDEs} Regularity conditions ensure well-posedness of the system of SDEs \cref{eq:HPF_dS}--\cref{eq:HPF_dW}, together with \cref{eq:hidden}--\cref{eq:observation} (see, e.g., \cite[Theorem 6.30]{Klebaner2005}).
        \item\label{asmp:initial} The initial condition is $S_0 \sim P_0$ and $W_0=1$.
        \item\label{asmp:BM} The BM $B_t$ is independent of \{$(B^{\text{\tiny X}}_t)_{t\geq 0}$, $(B^{\text{\tiny Y}}_t)_{t\geq 0}$, $X_0$, $S_0$\}.
        \item\label{asmp:functions_S} $u \in C^{1,0}$ and $k,v \in C^{2,0}$.
        \item\label{asmp:functions_W} $\gamma, \varepsilon, \zeta \in C^{1,0}$.
    \end{enumerate}
    The second key result of our work is the following:
	\begin{theorem}[unifying theorem]\label{thm:unifying}
    	Consider the filtering problem \cref{eq:hidden}--\cref{eq:observation} with filtering density $p(x,t)$, which satisfies the KSE \eqref{eq:KSE_dis}. 
    	Let $(S_t, W_t)$ be a pair process, representing particles and their importance weights, respectively, that evolves according to the dynamics \cref{eq:HPF_dS}--\cref{eq:HPF_dW} under the assumptions \ref{asmp:SDEs}--\ref{asmp:functions_W}. 
    	Then if the particle system described by $(S_t,W_t)$ targets the filtering distribution for all $0 < t< T$, according to \cref{def:targets}, the functions \{$u(x,t), k(x,t),v(x,t),\gamma(x,t),\varepsilon(x,t),\zeta(x,t)$\} satisfy the following equations for all $0 \leq t<T$:
    	\begin{gather}
    	    \frac{\partial}{\partial x} ( k p ) + ( h - \hat{h}_t - \varepsilon ) p = 0, \label{eq:PDE_dY} \\
        	\frac{1}{2} \frac{\partial^2}{\partial x^2} \big( ( k^2 + v^2 - g^2 ) p \big) - \frac{\partial}{\partial x} \big( ( u +  k \varepsilon + v \zeta - f)  p \big) + (  (h - \hat{h}_t)\hat{h}_t + \gamma ) p = 0; \label{eq:PDE_dt}
    	\end{gather}
    	in addition, the functions \{$\varepsilon(x,t),\gamma(x,t)$\} have zero mean under the filtering distribution for all $0 \leq t<T$, that is,
    	\begin{equation}\label{eq:zero-mean}
    	    \int_{\mathbb{R}} \varepsilon(x,t) p(x,t) dx = 0, \quad\, \int_{\mathbb{R}} \gamma(x,t) p(x,t) dx = 0.
    	\end{equation}
    \end{theorem}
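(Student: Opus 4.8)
The strategy is to obtain the It\^o differential of the weighted conditional expectation $\Pi_t(\phi):=\mathbb{E}[\,W_t\phi(S_t)\mid\mathcal{F}^Y_t\,]$ for an arbitrary test function $\phi\in C_c^\infty(\mathbb{R})$ in two ways and compare them. By the targeting condition \cref{eq:targets}, $\Pi_t(\phi)=\int_\mathbb{R}\phi\,p\,dx$, so on one side $d\Pi_t(\phi)$ is prescribed by the weak form of the KSE \eqref{eq:KSE_dis} (obtained by testing it against $\phi$ and integrating by parts),
\[
  d\!\int_\mathbb{R}\phi\,p\,dx=\int_\mathbb{R}\bigl(f\phi'+\tfrac12 g^2\phi''\bigr)p\,dx\,dt+\int_\mathbb{R}(h-\hat h_t)\phi\,p\,dx\,\bigl(dY_t-\hat h_t\,dt\bigr),\qquad \hat h_t=\int_\mathbb{R}h\,p\,dx .
\]
On the other side, $d\Pi_t(\phi)$ is read off from the particle dynamics \cref{eq:HPF_dS}-\cref{eq:HPF_dW}. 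Equating the finite-variation parts and the martingale parts of the two decompositions (these are unique, since both sides are continuous $\mathcal{F}^Y_t$-semimartingales) and using that $\phi$ is arbitrary will produce \cref{eq:PDE_dY}-\cref{eq:PDE_dt} together with \cref{eq:zero-mean}.

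The essential preparatory move is to remove the unobserved signal from \cref{eq:HPF_dS}-\cref{eq:HPF_dW} by substituting $dY_t=\hat h_t\,dt+d\nu_t$, where $\nu_t:=Y_t-\int_0^t\hat h_s\,ds$ is the innovation process. Then $\nu$ is an $\mathcal{F}^Y_t$-Brownian motion, and -- using the independence assumption \ref{asmp:BM} together with the fact that $S_0$ is drawn independently of the signal and the observation -- it remains a Brownian motion in the enlarged filtration $\mathcal{G}_t:=\mathcal{F}^Y_t\vee\sigma(B_s,S_0:s\le t)$, in which $(S_t,W_t)$ is adapted, $B$ is an independent Brownian motion, and $\langle\nu,B\rangle\equiv0$; in particular $\mathcal{F}^Y_t$ is immersed in $\mathcal{G}_t$. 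Applying It\^o's formula to $W_t\phi(S_t)$ in $\mathcal{G}_t$ yields
\[
  d\bigl(W_t\phi(S_t)\bigr)=W_t\,\Xi_t\,dt+W_t\bigl(\varepsilon\phi+k\phi'\bigr)\,d\nu_t+W_t\bigl(\zeta\phi+v\phi'\bigr)\,dB_t ,
\]
with $\Xi_t=\gamma\phi+\varepsilon\hat h_t\phi+u\phi'+k\hat h_t\phi'+\tfrac12(k^2+v^2)\phi''+(k\varepsilon+v\zeta)\phi'$, all functions evaluated at $(S_t,t)$ and $\phi,\phi',\phi''$ at $S_t$; crucially no explicit $X_t$ survives, only the $\mathcal{F}^Y_t$-measurable quantity $\hat h_t$.

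Taking $\mathbb{E}[\,\cdot\mid\mathcal{F}^Y_t\,]$ then gives $d\Pi_t(\phi)=\mathbb{E}[W_t\Xi_t\mid\mathcal{F}^Y_t]\,dt+\mathbb{E}[W_t(\varepsilon\phi+k\phi')\mid\mathcal{F}^Y_t]\,d\nu_t$: the $dB_t$-integral projects to zero because $B$ is independent of $\mathcal{F}^Y_\infty$, while by immersion of $\mathcal{F}^Y_t$ in $\mathcal{G}_t$ the $d\nu_t$-integral and the drift project term by term (the standard projection lemmas of nonlinear filtering). Now I would invoke \cref{eq:targets} to replace every $\mathbb{E}[W_t\psi(S_t)\mid\mathcal{F}^Y_t]$ by $\int_\mathbb{R}\psi\,p\,dx$ -- the integrands $\psi$ occurring here ($\phi$, $\varepsilon\phi+k\phi'$, and the summands of $\Xi_t$) are admissible thanks to \ref{asmp:functions_S}-\ref{asmp:functions_W} -- and equate the outcome with the KSE expression above. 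Matching the $d\nu_t$-coefficients, integrating by parts in $x$, and letting $\phi$ vary gives \cref{eq:PDE_dY}; matching the $dt$-coefficients and integrating by parts gives a preliminary identity which, after inserting $\partial_x(kp)=(\varepsilon+\hat h_t-h)p$ from \cref{eq:PDE_dY}, collapses all residual $\hat h_t$-terms into $(h-\hat h_t)\hat h_t\,p$ and becomes \cref{eq:PDE_dt}.

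Finally, evaluating the two matched identities at $\phi\equiv1$ (legitimate by a bounded approximation, or directly from the requirement that $\mathbb{E}[W_t\mid\mathcal{F}^Y_t]=\int_\mathbb{R}p\,dx=1$ be constant in $t$) yields $\int_\mathbb{R}\varepsilon\,p\,dx=\int_\mathbb{R}(h-\hat h_t)p\,dx=0$ and then $\int_\mathbb{R}\gamma\,p\,dx+\hat h_t\int_\mathbb{R}\varepsilon\,p\,dx=0$, hence \cref{eq:zero-mean}. I expect the main obstacle to be the passage to $\mathbb{E}[\,\cdot\mid\mathcal{F}^Y_t\,]$: one has to project onto the observation filtration a particle SDE whose $dY_t$-term couples it to the unobserved signal, and the innovation substitution together with the auxiliary filtration $\mathcal{G}_t$ (in which $\mathcal{F}^Y_t$ is immersed, so that every coefficient becomes $\mathcal{F}^Y$-measurable modulo the value of $S_t$) is precisely what makes this projection clean; once that is in place, the remainder is routine It\^o calculus, integration by parts under the regularity assumptions, and uniqueness of the semimartingale decomposition.
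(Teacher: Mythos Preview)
Your proposal is correct and follows essentially the same route as the paper: apply It\^o's formula to $W_t\phi(S_t)$, project onto $\mathcal{F}^Y_t$ (the paper does this via an interchange lemma for conditional expectations rather than an explicit immersion argument), use the targeting assumption to turn $\mathbb{E}[W_t\psi(S_t)\mid\mathcal{F}^Y_t]$ into posterior integrals, match the two semimartingale decompositions, and integrate by parts. The only cosmetic difference is that the paper matches $dY_t$- and $dt$-coefficients directly rather than $d\nu_t$- and $dt$-coefficients, which spares your final step of substituting \eqref{eq:PDE_dY} back into the drift identity to obtain \eqref{eq:PDE_dt}.
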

    
    The proof appears in \cref{prf:unifying}.
    In short, the results follow from the fact that the terms multiplying $dY_t$ and $dt$ on both sides of $d\mathbb{E}[ W_t\phi(S_t) | \mathcal{F}^{Y}_t ] = d\mathbb{E}\big[\phi(X_t) | \mathcal{F}^{Y}_t\big]$ should be equal a.s., regardless of $\phi$.
    In particular, considering $\phi=1$ leads to the last statement \cref{eq:zero-mean}, while considering the class of compactly supported test functions $\phi\in C^2_k$ yields the system of ODEs \cref{eq:PDE_dY}--\cref{eq:PDE_dt}, which become PDEs in higher dimensions. 
    Note that through these equations, the coefficients in the particle system dynamics depend on the targeted distribution. 
    This means that the system of SDEs \cref{eq:HPF_dS}--\eqref{eq:HPF_dW} are of McKean--Vlasov type.
    Recall that based on formula \cref{eq:targets}, the distribution targeted by a particle system is obtained by evaluating the left-hand side of this formula with an indicator function as a test function $\phi$. 
    
    Compared to the BPF \cref{eq:BPF_dS}--\cref{eq:BPF_dW} and FPF \cref{eq:FPF_dS}--\cref{eq:FPF_dW}, our ansatz \cref{eq:HPF_dS}--\cref{eq:HPF_dW} is clearly more general in several aspects, for example the presence of $dB_t$ and $dY_t$ in both dynamics. 
    Thus, we may refer to this approach as “hybrid particle filter.”
    As we shall show in \cref{subsec:dW_optimal}, the presence of $dB_t$ in the weight process actually decreases its variance. 
    Here the particle dynamics is still supposed to not involve $W_t$ explicitly and the coefficients in the weight dynamics are intentionally chosen to be linear in $W_t$. 
    This choice has an advantage, as can be seen in the proof of the theorem.
    Specifically, it allows us to use the targeting assumption in order to convert conditional expectations appearing in $d \mathbb{E}[W_t \phi(S_t)|\mathcal{F}^{Y}_t]$ to posterior expectations.
    
    The theorem above, while providing only necessary conditions for targeting the filtering distribution, sheds light on the freedom in choosing the coefficients of the particle and weight dynamics. 
    It is easy to verify that the setting \{$u=f$, $k=0$, $v=g$, $\zeta=0$\} yields the BPF \cref{eq:BPF_dS}--\cref{eq:BPF_dW}. 
    We demonstrate in \cref{subsec:class} that the FPF also satisfies the necessary conditions. 
    Note that the first equation \cref{eq:PDE_dY} is similar to the gain equation \cref{eq:FPF_K} in the FPF except the extra term $\varepsilon p$, which arises here due to the nonzero weight dynamics. 
    This freedom might help us compensate for the gain estimation errors with weight dynamics, as will be outlined in \cref{subsec:gain_estimation}.
    
    We close this subsection by pointing out that our results also hold in the unconditional setting, i.e., for a particle system targeting the solution of the Fokker--Planck equation. 
    In particular, if we set $h=0$, then the observation process $Y_t$ does not provide any information about $X_t$ and the KSE \cref{eq:KSE_dis} reduces to the Fokker--Planck equation. 
    The corollary below follows immediately from \cref{thm:unifying} and shows that even in this setting, there exists intrinsic freedom in constructing the dynamics of the particle system while keeping its distribution invariant.
    \begin{corollary}\label{crl:unifying_unconditional}
        Consider the stochastic process $X_t$ satisfying the SDE \cref{eq:hidden}, and let $\bar{p}(x,t)$ denote the probability density function of $X_t$, which satisfies the Fokker--Planck equation. Let $(S_t, W_t)$ be a pair process, representing particles and their importance weights, respectively, that evolves according to the dynamics below:
        \begin{align}
            dS_t &= u (S_t,t)\,dt + v(S_t,t)\, dB_t, \\
            dW_t &= W_t \big( \gamma (S_t,t)\,dt + \zeta(S_t,t)\,dB_t \big),
        \end{align}
        under the assumptions \ref{asmp:SDEs}--\ref{asmp:functions_W}, if applicable. 
        Then if the particle system described by $(S_t,W_t)$ targets $\bar{p}$, i.e., $\mathbb{E}[W_t \phi(S_t)] = \mathbb{E}[\phi(X_t)]$ holds for any measurable function $\phi$ with $\mathbb{E}[|\phi(X_t)|]<\infty$, for all $0 < t< T$, the functions \{$u(x,t),v(x,t),\gamma(x,t),\zeta(x,t)$\} satisfy the following equation for all $0 \leq t<T$:
    	\begin{equation} \label{eq:PDE_dt_FPE}
             \frac{1}{2} \frac{\partial^2}{\partial x^2} \big( ( v^2 - g^2 ) \bar{p} \big) - \frac{\partial}{\partial x} \big( ( u + v \zeta - f)  \bar{p} \big) +  \gamma \bar{p} = 0;
    	\end{equation}
    	in addition, the function $\gamma(x,t)$ satisfies $\int_{\mathbb{R}} \gamma(x,t) \bar{p}(x,t) dx = 0 $ for all $0 \leq t<T$.
    \end{corollary}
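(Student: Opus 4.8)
The plan is to derive the corollary as a direct specialization of \cref{thm:unifying}, exploiting the fact that the choice $h=0$ renders the observations uninformative. First I would record that with $h=0$ the observation equation \cref{eq:observation} is driven by pure noise, so $(Y_t)_{t\ge 0}$ is independent of $\{(B^{\text{\tiny X}}_t)_{t\ge 0},X_0,S_0,(B_t)_{t\ge 0}\}$ and hence of both the hidden process $X_t$ and the pair process $(S_t,W_t)$. Therefore $\mathcal{F}^Y_t$ carries no information: conditioning on it is trivial, $\hat h_t=\mathbb{E}[h(X_t)\mid\mathcal{F}^Y_t]=0$, the filtering density $p(x,t)$ equals the unconditional density $\bar p(x,t)$ of $X_t$, the KSE \cref{eq:KSE_dis} collapses to the Fokker--Planck equation, $Q_t$ becomes the unconditional law of $S_t$, and the targeting condition \cref{eq:targets} reduces to the unconditional identity $\mathbb{E}[W_t\phi(S_t)]=\mathbb{E}[\phi(X_t)]$ appearing in the statement.

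Next I would match the dynamics: the SDEs in the corollary are exactly \cref{eq:HPF_dS}-\cref{eq:HPF_dW} with $k\equiv 0$ and $\varepsilon\equiv 0$ (no $dY_t$ terms in either line). Under these choices the regularity hypotheses \ref{asmp:functions_S}-\ref{asmp:functions_W} follow from the assumed regularity of $u,v,\gamma,\zeta$, while \ref{asmp:SDEs}, \ref{asmp:initial} and \ref{asmp:BM} hold as stated (with $P_0=\mathrm{Law}(X_0)$ and the mention of $B^{\text{\tiny Y}}$ being vacuous). Invoking \cref{thm:unifying} and substituting $h=0$, $\hat h_t=0$, $k=0$, $\varepsilon=0$ into its conclusions, equation \cref{eq:PDE_dY} becomes the vacuous identity $0=0$, equation \cref{eq:PDE_dt} becomes precisely \cref{eq:PDE_dt_FPE}, and the pair of mean-zero conditions \cref{eq:zero-mean} collapses to the single constraint $\int_{\mathbb R}\gamma(x,t)\bar p(x,t)\,dx=0$, since the $\varepsilon$-condition is trivially met. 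This is exactly the assertion of the corollary.

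The only delicate point -- and the reason for the qualifier ``if applicable'' on the assumptions -- is to confirm that the degenerate observation model $h\equiv 0$ is a legitimate instance of the framework behind \cref{thm:unifying}: one should check that the well-posedness condition \ref{asmp:SDEs} and the interpretation of the initial condition \ref{asmp:initial} still make sense when $Y_t$ is pure noise. I expect this to be routine. If one prefers not to treat \cref{thm:unifying} as a black box at this edge case, the alternative is to reproduce its proof from \cref{prf:unifying} with all $\mathcal{F}^Y_t$-conditioning removed and with $h,\hat h_t,k,\varepsilon$ set to zero throughout; every step survives unchanged and delivers \cref{eq:PDE_dt_FPE} together with the stated zero-mean condition on $\gamma$.
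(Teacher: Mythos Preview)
Your proposal is correct and matches the paper's own approach exactly: the paper states that the corollary ``follows immediately from \cref{thm:unifying}'' after setting $h=0$, which is precisely the specialization you carry out, together with the induced choices $k\equiv 0$, $\varepsilon\equiv 0$.
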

    
\subsection{A class of particle filters}\label{subsec:class}
    Our goal now is to introduce a class of PFs within the result of \autoref{thm:unifying} that encompasses the BPF as well as the FPF.
    This demonstrates how these seemingly different methods can be derived from the same framework. 
    Observe that in \cref{eq:PDE_dY}, $\varepsilon=h-\hat{h}_t$ implies $k=0$ while $\varepsilon=0$ yields \cref{eq:FPF_K}. 
    Thus, a choice for $\varepsilon$ that linearly interpolates between $h-\hat{h}_t$ and $0$ makes it possible to have a smooth transition between the BPF and FPF, though it does not simplify the gain equation. 
    The next proposition states the result.
    
    \begin{proposition}\label{prop:class}
        Under the same assumptions as in \autoref{thm:unifying}, and assuming that \cref{eq:PDE_dY} holds, a particular solution to \cref{eq:PDE_dt} is given by the following class:
    	\begin{align}
    	    \varepsilon\; & = \eta (h-\hat{h}_t ) + \tilde{\varepsilon}, \label{eq:class_varepsilon}  \\
    	    \gamma\; & = - (\alpha + \eta - \alpha \eta)  (h - \hat{h}_t) \hat{h}_t - (1-\alpha)  \tilde{\varepsilon} \hat{h}_t , \label{eq:class_gamma} \\
    	    v^2 & = g^2 - \beta k^2, \label{eq:class_v} \\
    	    u\; & =  f - \tfrac{1}{2} k \big(\vartheta_1 h + \vartheta_2  \hat{h}_t + (1+\beta)\tilde{\varepsilon}-(1-\beta) \tfrac{\partial}{\partial x} k  \big) - v \zeta, \label{eq:class_u}
    	\end{align}
    	where $\tilde{\varepsilon} \in C^{1,0}$ is an arbitrary function with zero mean under the filtering distribution such that $\varepsilon$ meets the requirements of \ref{asmp:SDEs}, \{$\alpha$, $\beta$, $\eta$\} are free parameters in $\mathbb{R}$, either constant or continuously time-varying, and $\vartheta_1,\vartheta_2$ are defined as follows:
    	\begin{equation}
    	    \vartheta_1 :=1-\beta+\eta+\beta\eta , \quad\,\, \vartheta_2 := 1+\beta-\eta-\beta\eta-2\alpha.
    	\end{equation}
    \end{proposition}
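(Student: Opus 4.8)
\emph{Proof plan.} The claim is that \cref{eq:class_varepsilon}--\cref{eq:class_u} solve \cref{eq:PDE_dt} whenever \cref{eq:PDE_dY} holds, so the plan is a direct substitution-and-cancellation argument that uses \cref{eq:PDE_dY} twice. First, \cref{eq:class_v} gives $k^2+v^2-g^2=(1-\beta)k^2$, which collapses the second-order term of \cref{eq:PDE_dt} to $\tfrac{1-\beta}{2}\,\tfrac{\partial^2}{\partial x^2}(k^2 p)$. Writing this as $\tfrac{\partial}{\partial x}\big(\tfrac{1-\beta}{2}\tfrac{\partial}{\partial x}(k^2p)\big)$ and expanding $\tfrac{\partial}{\partial x}(k^2p)=k p\,\tfrac{\partial}{\partial x}k+k\,\tfrac{\partial}{\partial x}(kp)$, I would use \cref{eq:PDE_dY} in the form $\tfrac{\partial}{\partial x}(kp)=-(h-\hat{h}_t-\varepsilon)p$ to eliminate $\tfrac{\partial}{\partial x}(kp)$. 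After this step the whole left-hand side of \cref{eq:PDE_dt} takes the form $\tfrac{\partial}{\partial x}(Mp)+\big((h-\hat{h}_t)\hat{h}_t+\gamma\big)p$ for a purely first-order coefficient $M$.

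Second, I would substitute \cref{eq:class_u} for $u+v\zeta-f$ and \cref{eq:class_varepsilon} for $\varepsilon$ into $M$. The two $\tfrac{1-\beta}{2}k\,\tfrac{\partial}{\partial x}k$ contributions cancel immediately, and the remaining terms of $M$ can be grouped along the ``basis'' $\{kh,\ k\hat{h}_t,\ k\tilde{\varepsilon}\}$. The definitions $\vartheta_1=1-\beta+\eta+\beta\eta$ and $\vartheta_2=1+\beta-\eta-\beta\eta-2\alpha$ are tuned precisely so that the $kh$-coefficient and the $k\tilde{\varepsilon}$-coefficient both vanish, leaving $M=(1-\alpha)\,k\,\hat{h}_t$. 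Since $\hat{h}_t$ is independent of $x$, a second application of \cref{eq:PDE_dY} gives $\tfrac{\partial}{\partial x}(Mp)=(1-\alpha)\hat{h}_t\,\tfrac{\partial}{\partial x}(kp)=-(1-\alpha)\hat{h}_t(h-\hat{h}_t-\varepsilon)p$. Plugging this back, dividing by $p$, and solving for $\gamma$ yields $\gamma=\big[(1-\alpha)(1-\eta)-1\big](h-\hat{h}_t)\hat{h}_t-(1-\alpha)\tilde{\varepsilon}\hat{h}_t$, which is exactly \cref{eq:class_gamma} once one notes $(1-\alpha)(1-\eta)-1=-(\alpha+\eta-\alpha\eta)$. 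This establishes \cref{eq:PDE_dt}. I would also remark that the mean-zero requirements \cref{eq:zero-mean} hold for free: $h-\hat{h}_t$ has zero mean under $p$ by definition of $\hat{h}_t$ and $\tilde{\varepsilon}$ does by hypothesis, so both $\varepsilon$ in \cref{eq:class_varepsilon} and $\gamma$ in \cref{eq:class_gamma} integrate to zero against $p$.

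The computation is otherwise elementary; the one place where care is essential — and the step I expect to be the main obstacle to writing cleanly — is the coefficient bookkeeping in $M$: one has to respect the sign convention in \cref{eq:PDE_dY}, keep the two distinct uses of \cref{eq:PDE_dY} straight (once to reduce the second-order term, once at the end to absorb the residual $\hat{h}_t$-term), and verify that the cancellations forced by $\vartheta_1$ and $\vartheta_2$ genuinely annihilate the $kh$ and $k\tilde{\varepsilon}$ contributions. I would present those cancellations as three short sub-identities rather than a single expanded line, so that the role of each of the free parameters $\alpha,\beta,\eta$ is visible.
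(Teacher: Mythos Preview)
Your proposal is correct and the computation checks out: the $kk'$, $kh$, and $k\tilde\varepsilon$ coefficients in $M$ indeed vanish with $\vartheta_1,\vartheta_2$ as defined, leaving $M=(1-\alpha)k\hat h_t$, and the second use of \cref{eq:PDE_dY} then recovers \cref{eq:class_gamma} exactly as you describe.

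The paper's proof is organized differently, as a \emph{construction} rather than a \emph{verification}. Instead of substituting the proposed formulas and cancelling, the paper starts from \cref{eq:PDE_dt}, splits $\tfrac{\partial^2}{\partial x^2}(k^2p)$ into a $\beta$-fraction (kept as is) and a $(1-\beta)$-fraction (reduced to first order via \cref{eq:PDE_dY}), and likewise splits the zero-order term $(h-\hat h_t)\hat h_t p$ into an $\alpha$-fraction (kept) and a $(1-\alpha)$-fraction (raised to first order via \cref{eq:PDE_dY}, using $(h-\hat h_t)p=-\tfrac{\partial}{\partial x}(kp)+\varepsilon p$). This yields a rewritten equation with three groups---second order, first order, zero order---and the class \cref{eq:class_varepsilon}--\cref{eq:class_u} is obtained by setting each group to zero separately. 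So both arguments invoke \cref{eq:PDE_dY} twice and do the same algebra, but the paper converts the zero-order term \emph{up} to first order while you convert the residual first-order term \emph{down} to zero order. The paper's route makes the meaning of $\alpha,\beta$ transparent (they are the splitting weights), whereas in your route they appear only as values engineered to force cancellations; on the other hand, your verification is shorter to write and makes the role of $\vartheta_1,\vartheta_2$ as cancellation devices explicit. Either presentation is fine.
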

    
    The proof is simple, and it is given in \cref{prf:class}. 
    In particular, if in the class above, we fix \{$\tilde{\varepsilon}=0$, $\zeta=0$, $\beta=0$\} and let the others \{$\alpha$, $\eta$\} be free, we obtain a subclass that interpolates the BPF (when $\eta=1$) and the FPF (when $\eta=0$, $\alpha=0$). 
    The parameter $\eta$ can be interpreted as the “observation parameter,” which determines how much the observation process is incorporated into the particle or weight dynamics.
    We refer to $\alpha$ as the “drift parameter,” which only appears in drift functions $u,\gamma$.
    Last, we call $\beta$ the “diffusion parameter”
    since it controls the magnitude of the diffusion coefficient $v$.
    Among these, $\eta$ is the most relevant one as far as filtering is concerned.
    
    It should be noted that not all parameter choices for \{$\alpha$, $\beta$, $\eta$\} give rise to a “practical” particle filter. 
    In practice, one has to check other criteria, for example the nondegeneracy of the particle distribution and the stability of the system. 
    This can be seen explicitly in the linear-Gaussian case below. 
    
     \begin{example}[linear-Gaussian case, continued]
     Consider the three-parameter \{$\alpha$, $\beta$, $\eta$\} subclass of particle filters given by \cref{prop:class} after setting $\tilde{\varepsilon}=0$ and $\zeta=0$ for the linear-Gaussian setting \cref{eq:linear_problem} with the filtering distribution \cref{eq:posterior_mean}--\cref{eq:posterior_var}. Then the (unweighted) particle distribution reads as $S_t|\mathcal{F}^Y_t \sim \mathcal{N}(\mu_t,\rho_t)$, where
        \begin{align}
		d\mu_t  &= a\mu_t\,dt + c\hat{\rho}_t (1-\eta) \big(dY_t -  c [ \tfrac{1}{2}\vartheta_1 \mu_t + \tfrac{1}{2} \vartheta_2 \hat{\mu}_t ] \,dt \big), \label{eq:particle_mean} \\
		d\rho_t &= b^2\,dt + 2a\rho_t\,dt -c^2 \hat{\rho}_t (1-\eta) [ \vartheta_1 \rho_t + (\beta-\beta\eta) \hat{\rho}_t ]\,dt. \label{eq:particle_var}
		\end{align}
     The derivation is briefly explained in \cref{prf:q}.
     Observe how particle distribution interpolates between the prior distribution (when $\eta=1$) and the posterior distribution (when $\eta=0$, $\alpha=0$).
     It is easy to confirm that in the latter case, where $\beta$ remains as the only free parameter, the PF resulting from \cref{prop:class} corresponds to the unweighted linear PF stated in \cite[equation 17]{Abedi2019}, with one BM, if it is indeed rewritten in terms of $v$.
     Notably, $v=0$ recovers the deterministic linear FPF introduced in \cite{Taghvaei2016}.
     
     As the right-hand side of \cref{eq:class_v} must be nonnegative, we deduce that $\beta (1-\eta)^2 \leq \smash{b^2/(c^2 \hat{\rho}_t^2) }$.
     Moreover, in the case where $\smash{\hat{\rho}_\infty:=\lim_{t\to\infty}\hat\rho_t<\infty}$, if we want to avoid that the variance grows exponentially, we need to restrict $\eta$ such that $\smash{\eta^2 < 1 - 2a /(c^2 \hat{\rho}_\infty)}$.
    Constraints like these become important in numerical implementations.
    \end{example}

\subsection{Stochastic differential of the optimal weight}\label{subsec:dW_optimal} 
    We saw in \cref{subsec:weights} that weights for a fixed particle distribution $Q_t$ are not unique and indeed $W^*_t := \frac{dP_t}{dQ_t}(S_t)$ is the one that minimizes the variance.
    In the context of the unifying theorem, which assumes additional constraints regarding the time-evolution of the particle system, this nonuniqueness means that if we fix the particle dynamics, there are many possibilities for the weight dynamics.
    In other words, if we fix the functions \{$u,k,v$\}, we are then left with three unknowns \{$\gamma,\varepsilon,\zeta$\} but only two equations \cref{eq:PDE_dY}--\cref{eq:PDE_dt} to constrain them.
    Here we demonstrate that for each choice of the dynamics of $S_t$ according to \cref{eq:HPF_dS}, the dynamics of the optimal weight $W^*_t$ also takes the form of \cref{eq:HPF_dW} whose coefficients denoted by \{$\gamma^*,\varepsilon^*,\zeta^*$\} are given by the proposition below.

	\begin{proposition}\label{prop:dW_optimal}
	    Fix the particle dynamics \cref{eq:HPF_dS} with $S_0 \sim P_0$ under the assumptions \ref{asmp:BM}--\ref{asmp:functions_S}.
	    Then the optimal weight $W^*_t$ from \cref{thm:weights} satisfies the SDE
    	\begin{equation}\label{eq:dW_t^*}
    	    dW_t^* = W_t^*\big( \gamma^* (S_t,t)\,dt + \varepsilon^*(S_t,t)\,dY_t + \zeta^*(S_t,t)\,dB_t\big),
    	\end{equation}
	    where $\varepsilon^*$, $\gamma^*$ are $P_t$-a.s. unique solutions to equations \cref{eq:PDE_dY}--\cref{eq:PDE_dt} after setting
		\begin{equation}\label{eq:zeta^*}
	        \zeta(x,t) = \zeta^*(x,t) := v(x,t) \tfrac{\partial}{\partial x} \big(\log \tfrac{p(x,t)}{q(x,t)}\big).
	    \end{equation}
	    In addition, $\varepsilon^*$, $\gamma^*$ satisfy the condition \cref{eq:zero-mean}.
	\end{proposition}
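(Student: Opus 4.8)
The plan is to compute the stochastic differential of $W^*_t=p(S_t,t)/q(S_t,t)$ directly, and then to identify the three coefficients by appealing to \cref{thm:weights} and \cref{thm:unifying}; here $q(\cdot,t)$ denotes the Lebesgue density of $Q_t$, which we take to exist and be strictly positive (otherwise $\zeta^*$ in \cref{eq:zeta^*} is not even well defined). The step requiring the most care is the first one: establishing that the conditional particle density $q$ solves the stochastic Fokker--Planck equation
\begin{equation*}
    dq = \Big[\tfrac12\,\partial_x^2\big((k^2+v^2)\,q\big)-\partial_x(u\,q)\Big]\,dt-\partial_x(k\,q)\,dY_t .
\end{equation*}
This follows from a standard filtering (Fujisaki--Kallianpur--Kunita-type) computation --- it is the $W_t\equiv1$ specialization of the Itô-plus-conditioning argument underlying the proof of \cref{thm:unifying}, carried out here without a targeting assumption (which indeed fails for $W_t\equiv1$ in general): writing $dY_t=\hat{h}_t\,dt+d\nu_t$ in terms of the innovation $\nu$, applying Itô's formula to $\phi(S_t)$ for test functions $\phi$, and projecting onto $\mathcal{F}^Y_t$ --- which is legitimate because, by \ref{asmp:BM}, both $B$ and $S_0$ are independent of $\mathcal{F}^Y_\infty$, so that $Y$ plays the role of an exogenous input to \cref{eq:HPF_dS} --- one obtains the displayed equation after the $\hat{h}_t$ terms cancel and one integrates by parts. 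Existence of $q$ and well-posedness of this SPDE is where the regularity hypotheses \ref{asmp:SDEs}--\ref{asmp:functions_S} enter.

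Given this equation, I would apply the Itô--Wentzell formula to $\ell(S_t,t)$ with $\ell(x,t):=\log p(x,t)-\log q(x,t)$, using the Kushner--Stratonovich equation \cref{eq:KSE_dis} for $p$, the Fokker--Planck equation above for $q$, and the particle dynamics \cref{eq:HPF_dS}. Since the only martingale integrators in the SPDEs for $p$, $q$ and in $dS_t$ are $dY_t$ and $dB_t$, this yields $d\ell(S_t,t)=A_t\,dt+\varepsilon^*(S_t,t)\,dY_t+\zeta^*(S_t,t)\,dB_t$ for explicit $\mathcal{F}^Y_t$-adapted fields $A_t,\varepsilon^*,\zeta^*$; the $dB_t$ term is immediate, as the only source of $B$ in $d\ell(S_t,t)$ is the contribution $\partial_x\ell(S_t,t)\,v(S_t,t)\,dB_t$, giving $\zeta^*=v\,\partial_x\log(p/q)$, which is \cref{eq:zeta^*}. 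Exponentiating with Itô's formula then produces $dW^*_t=W^*_t\big(\gamma^*(S_t,t)\,dt+\varepsilon^*(S_t,t)\,dY_t+\zeta^*(S_t,t)\,dB_t\big)$, that is \cref{eq:dW_t^*}, with $\gamma^*,\varepsilon^*,\zeta^*\in C^{1,0}$ inherited from the regularity of $p$, $q$ and of the coefficients.

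It remains to identify $\varepsilon^*,\gamma^*$ with the (unique) solutions of \cref{eq:PDE_dY}--\cref{eq:PDE_dt} and to obtain \cref{eq:zero-mean}; rather than recomputing $\varepsilon^*,\gamma^*$, I would argue as follows. Because $W^*_t=\tfrac{dP_t}{dQ_t}(S_t)$ is measurable with respect to $\mathcal{F}^Y_t$ and $S_t$, identity \cref{eq:weight_Radon-Nikodym} holds trivially, so --- by the remark following \cref{thm:weights} --- the pair $(S_t,W^*_t)$ targets $P_t$; moreover $W^*_0=1$, since $Q_0=P_0$. As $(S_t,W^*_t)$ also satisfies \cref{eq:HPF_dS}--\cref{eq:HPF_dW}, \cref{thm:unifying} applies and forces $(u,k,v,\gamma^*,\varepsilon^*,\zeta^*)$ to satisfy \cref{eq:PDE_dY}, \cref{eq:PDE_dt} and the zero-mean conditions \cref{eq:zero-mean}. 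For uniqueness, fix $\{u,k,v\}$ and set $\zeta=\zeta^*$: wherever $p>0$, \cref{eq:PDE_dY} determines $\varepsilon$ pointwise, and inserting this into \cref{eq:PDE_dt} determines $\gamma$ pointwise; hence $\varepsilon^*,\gamma^*$ are the $P_t$-a.s.\ unique solutions.

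The main obstacle I anticipate is the first step --- a rigorous derivation of the Fokker--Planck equation for the conditional particle density $q$, together with its existence and the well-posedness of the resulting SPDE --- and, alongside it, the bookkeeping in the Itô--Wentzell computation, where the quadratic covariation between $\partial_x\ell(S_t,t)$ and $S_t$ must be tracked carefully. Once those are in place, identifying $\zeta^*$ and invoking \cref{thm:weights} and \cref{thm:unifying} is routine.
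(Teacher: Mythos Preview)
Your argument is correct and follows the same overall architecture as the paper: derive the SPDE for $q$ (the paper quotes this as \cref{lemma:particle_distribution}, Proposition~1 in \cite{FPF2016}, rather than rederiving it), then apply the Kunita--It\^o--Wentzell formula to the density ratio evaluated at $S_t$, and read off $\zeta^*$ from the $dB_t$ coefficient. Two differences are worth noting. First, a cosmetic one: you work with $\ell=\log(p/q)$ and exponentiate, whereas the paper applies It\^o's formula to $p/q$ directly; the two are equivalent since the $dY_t$ and $dB_t$ coefficients of $d\log W^*_t$ coincide with $\varepsilon^*,\zeta^*$. Second, and more interestingly, the paper identifies $\varepsilon^*,\gamma^*$ with the solutions of \cref{eq:PDE_dY}--\cref{eq:PDE_dt} by a direct calculation of $\Upsilon_1,\Upsilon_2$ that it explicitly omits ``on account of space'', and then obtains \cref{eq:zero-mean} separately from $\mathbb{E}[W^*_t\mid\mathcal{F}^Y_t]=1$. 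Your route---observing that $(S_t,W^*_t)$ targets $P_t$ by the converse remark after \cref{thm:weights} and then invoking \cref{thm:unifying}---sidesteps that computation entirely and delivers both the PDE identification and \cref{eq:zero-mean} in one stroke. This buys you a cleaner argument at the cost of having to check that the hypotheses of \cref{thm:unifying} (in particular \ref{asmp:functions_W} for the computed $\gamma^*,\varepsilon^*,\zeta^*$) are met, which you acknowledge; the paper's direct verification does not need that check but is longer.
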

	
	The proof, which appears in \cref{prf:dW_optimal}, has a  straightforward idea, but requires lengthy calculations to find the stochastic differential of $\smash{ W_t^* := \tfrac{p(S_t,t)}{q(S_t,t)}}$ based on the Kunita--It\^o--Wentzell formula (see \cite[Theorem 1.1]{Kunita1981}), as the functions $p(x,t)$ and $q(x,t)$ satisfy SPDEs and their ratio is evaluated at $S_t$, which solves an SDE.
	
	The presence of the noise term $\zeta^*(S_t,t)\,dB_t$ in \cref{eq:dW_t^*} might appear counterintuitive in view of the goal to minimize the variance of $W_t$.
	Indeed, adding an independent BM term would only increase the variance.
	However, the BM appearing in \cref{eq:dW_t^*} is the same as that driving the process $S_t$, introducing correlation between $W_t^*$ and $S_t$, which consequently helps $W_t^*$ to achieve the minimal variance possible given the dynamics of $S_t$. In particular, if we restrict the particle evolution to the prior dynamics (i.e., $u=f$, $k=0$, $v=g$), then \cref{prop:dW_optimal} results in
	\begin{equation}\label{eq:dW_t^*_BPF}
	    dW_t^*  = W_t^* (h(S_t,t) - \hat{h}_t)\, (dY_t-\hat{h}_t dt) + W_t^* (\lambda^{*} (S_t,t) \, dt + \zeta^* (S_t,t) \, dB_t),
	\end{equation}
	where $\zeta^*$ is given by \cref{eq:zeta^*} with $v=g$ and $\lambda^{*}$ is the remaining drift coefficient:
	\begin{equation}\label{eq:lambda^*_BPF}
	    \lambda^{*}(x,t) : =  \tfrac{1}{p(x,t)} \tfrac{\partial}{\partial x} \big( g^2(x,t) p(x,t) \tfrac{\partial}{\partial x} (\log \tfrac{p(x,t)}{q(x,t)}) \big).
	    \end{equation}
	Notice that the presence of an additional term in \cref{eq:dW_t^*_BPF} compared to the weight process in the BPF \cref{eq:BPF_dW} indicates that the importance sampling used in the BPF is not optimal.
	Nonetheless, this correction term is not easy to evaluate in practice since the functions $\lambda^{*},\zeta^*$, which involve the densities $p,q$ explicitly, must be estimated from samples. In the linear-Gaussian case, however, the analytical computation is possible.
	
	\begin{example}[linear-Gaussian case, continued]
         Let the particle process evolve as the prior dynamics, and denote by $\mathcal{N}(\mu_t,\rho_t)$ its  (unweighted) distribution. Then \cref{eq:dW_t^*_BPF}--\cref{eq:lambda^*_BPF} can be computed analytically and yield the following PF:
    	\begin{align}
        	dS_t & = a S_t \, dt + b \, dB_t, \\
        	dW_t^*
        	 & = W_t^* (cS_t - c\hat{\mu}_t) (dY_t - c\hat{\mu}_t dt ) \\
        	 & + W_t^* b^2 \big( \tfrac{1}{\rho_t}-\tfrac{1}{\hat{\rho}_t} + (\tfrac{S_t-\hat{\mu}_t}{\hat{\rho}_t})^2 -
        	 \tfrac{(S_t-{\mu}_t) (S_t-\hat{\mu}_t)}{\rho_t\hat\rho_t} \big) \, dt \nonumber \\
        	 & + W_t^* b \big( \tfrac{S_t-{\mu}_t}{{\rho}_t} - \tfrac{S_t-\hat{\mu}_t}{\hat{\rho}_t}  \big) \, dB_t. \nonumber
    	\end{align}
    \end{example}

\section{Applications}\label{sec:applications}
\subsection{Compensating for gain approximation with weight dynamics}\label{subsec:gain_estimation}
    The gain function $K$ in the FPF is a solution to \cref{eq:FPF_K}, which is fixed; i.e., it only depends on the model. 
    In the unifying theorem, however, the function $k$, which has a role similar to that of $K$, is a solution to \cref{eq:PDE_dY}, which now has a term $\varepsilon$ which can be chosen freely. 
    Observe that $\frac{\partial}{\partial x} \big( (k-K) p \big) = \varepsilon p,$ which means that any deviation of $k$ from $K$ in the particle dynamics is compensated by $\varepsilon$ from the weight dynamics such that the particle system can eventually target the filtering distribution. 
    Recall from condition \cref{eq:zero-mean} that the mean of $\varepsilon$ is zero under the filtering distribution. 
    The variance of $\varepsilon$ under the filtering distribution can serve as an “error,” measuring the difference between $k$ and $K$. 
    Moreover, smaller values of $\varepsilon$ are preferable because to keep the weights close to unity, the coefficients in the weight dynamics, including $\varepsilon$, should remain as close to zero as possible. 
    
    There are now two approaches to exploiting this freedom in \cref{eq:PDE_dY}: (1) to first set $\varepsilon$ and then solve the equation for $k$, and (2) to set $k$ and find $\varepsilon$ afterwards. 
    In (1), the presence of $\varepsilon$ allows us to modify the equation to some extent, which might help us to use a simpler gain estimation method and compensate for it by an appropriate weight dynamics.
    It will be for future research to explore this possibility.
    In (2), which we explore in more detail, instead of setting $k$ explicitly, we may restrict ourselves to a specific class of functions denoted by $\mathcal{K}$ and pose the following variational problem:
    \begin{align}\label{eq:gain_variational}
    	\begin{aligned}
        	& \underset{k}{\text{minimize}}
        	& & \mathcal{I}[k] := \int_{\mathbb{R}} \varepsilon^2(x,t) p(x,t) dx \\
        	& \text{subject to}
        	& & k \in \mathcal{K}, \quad \varepsilon p = ( h - \hat{h}_t) p + \tfrac{\partial}{\partial x} ( k p ), 
    	\end{aligned}
    \end{align}
    for a fixed $t$ and $p$. The second constraint above comes from \cref{eq:PDE_dY}, and $k$ must also satisfy the regularity assumptions required for \cref{thm:unifying}.
    The objective functional $\mathcal{I}$ can be interpreted as the square of the Fisher--Rao norm $\|\dot p\|_{\text{FR}}^2$ of the fictional change in the distribution corresponding to the continuity equation $\dot p + \frac{\partial}{\partial x} \big( (k-K) p \big)=0$ (here we omit the time dependence of $k,K$ and the dot relates to a fictional time associated with the flow of the fixed vector field $k-K$).
    This contrasts with the use of the infinitesimal Wasserstein-2 or Otto's norm $\|\dot p\|_{W_2}^2=\int (k(x)-K(x))^2p(x)dx$ (see \cite{Lott2008}).
    As an example, we consider the case when $\mathcal{K}$ is the set of constant functions.
    In this case, the minimization of $\|\dot p\|_{W_2}^2$ over $\mathcal{K}$ gives rise to the standard constant gain approximation in the FPF (see \cite[Example 2 and Remark 5]{FPF2016}), and analogously, the problem \eqref{eq:gain_variational} admits a simple analytical solution, which we present in the proposition below and call Fisher-optimal constant gain for the aforementioned reasons.
    
    \begin{proposition}[Fisher-optimal constant gain approximation] \label{prop:optimal_constant_gain}
        Consider \cref{eq:PDE_dY} in \cref{thm:unifying}, and let $\mathcal{K}$ be the set of functions independent of $x$ with elements $\bar{k}(t)$, i.e., $\bar{k}(t) \in \mathcal{K}=\{ k(x,t): k'(x,t) =0 \,\, \forall x \in \mathbb{R} \}$. 
        Then the solution $\bar{k}^*(t)$ to the variational problem \cref{eq:gain_variational} at time $t$ is
    	\begin{equation}\label{eq:optimal_k}
    	    \bar{k}^*(t) =  \frac{\mathbb{E}[ h'(X_t,t) | \mathcal{F}^{Y}_t] }{ \mathbb{E}[ \psi^2(X_t,t) | \mathcal{F}^{Y}_t] },
    	 \end{equation}
    	 and the corresponding $\varepsilon$, which can be determined only up to $P_t$-null set, is $\varepsilon^*(x,t) := h(x,t) - \hat{h}_t +  \bar{k}^*(t) \, \psi(x,t)$, where we have defined $\psi(x,t) := {p'(x,t)}/{p(x,t)}$ over the interior of the support of $p$ and $\psi(x,t) :=0$ otherwise.
    \end{proposition}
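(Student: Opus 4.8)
The plan is to use the hypothesis $k\in\mathcal{K}$ to turn the PDE constraint in \cref{eq:gain_variational} (equivalently \cref{eq:PDE_dY}) into a pointwise algebraic identity, thereby collapsing the variational problem into a one-dimensional quadratic minimization over the scalar $\bar k(t)$.

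First I would substitute $k(x,t)=\bar k(t)$, which is independent of $x$, into \cref{eq:PDE_dY}. Since $\frac{\partial}{\partial x}\big(\bar k(t) p\big)=\bar k(t)\,p'$, the constraint $\varepsilon p=(h-\hat h_t)p+\frac{\partial}{\partial x}(kp)$ becomes $\varepsilon(x,t)\,p(x,t)=\big(h(x,t)-\hat h_t\big)p(x,t)+\bar k(t)\,p'(x,t)$. On the support of $p$ this is equivalent to $\varepsilon(x,t)=h(x,t)-\hat h_t+\bar k(t)\,\psi(x,t)$ with $\psi=p'/p$; off the support $\varepsilon$ is left unconstrained, which is precisely the origin of the ``$P_t$-null set'' caveat in the statement, and we adopt the convention $\psi:=0$ there. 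Inserting this expression into the objective gives, for fixed $t$ and $p$, $\mathcal{I}[\bar k]=\int_{\mathbb{R}}\big(h-\hat h_t+\bar k\,\psi\big)^2 p\,dx=A\,\bar k^2+2B\,\bar k+C$, a quadratic polynomial in the real variable $\bar k$ with coefficients $A=\mathbb{E}[\psi^2(X_t,t)\,|\,\mathcal{F}^Y_t]$, $B=\int_{\mathbb{R}}(h-\hat h_t)\psi p\,dx$, and $C=\mathbb{E}[(h(X_t,t)-\hat h_t)^2\,|\,\mathcal{F}^Y_t]$.

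Next, since $A=\mathbb{E}[\psi^2|\mathcal{F}^Y_t]>0$ whenever $p$ is non-constant (which holds under the standing regularity assumptions, a constant density being non-integrable), $\mathcal{I}$ is strictly convex and attains its unique minimum at $\bar k^*(t)=-B/A$. To identify $B$, I would integrate by parts: $B=\int_{\mathbb{R}}(h-\hat h_t)\tfrac{p'}{p}\,p\,dx=\int_{\mathbb{R}}(h-\hat h_t)\,p'\,dx=\big[(h-\hat h_t)p\big]_{-\infty}^{\infty}-\int_{\mathbb{R}}h'\,p\,dx=-\mathbb{E}[h'(X_t,t)\,|\,\mathcal{F}^Y_t]$, using that $\hat h_t$ does not depend on $x$ (so $\frac{\partial}{\partial x}\hat h_t=0$) and that the boundary term vanishes by the decay conditions inherited from \cref{thm:unifying}. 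Combining, $\bar k^*(t)=-B/A=\mathbb{E}[h'(X_t,t)|\mathcal{F}^Y_t]\big/\mathbb{E}[\psi^2(X_t,t)|\mathcal{F}^Y_t]$, which is \cref{eq:optimal_k}, and back-substitution gives $\varepsilon^*(x,t)=h(x,t)-\hat h_t+\bar k^*(t)\psi(x,t)$; a one-line check using $\int_{\mathbb{R}} p'\,dx=0$ confirms $\int_{\mathbb{R}}\varepsilon^* p\,dx=0$, in agreement with \cref{eq:zero-mean}.

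The optimization step is elementary; the main obstacle is the analytic bookkeeping around the density, namely justifying that the boundary term $\big[(h-\hat h_t)p\big]_{-\infty}^{\infty}$ vanishes (i.e.\ that $h$ grows slowly relative to the decay of $p$), that the integrals defining $A,B,C$ are finite, and that $\psi$, hence $\varepsilon^*$, need only be defined on $\{p>0\}$. All of these are supplied by the standing assumptions of \cref{thm:unifying}, so the proof will consist largely of invoking them at the right places.
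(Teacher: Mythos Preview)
Your proposal is correct and follows essentially the same route as the paper: substitute $k=\bar k(t)$ into the constraint to obtain $\varepsilon=h-\hat h_t+\bar k\,\psi$, expand $\mathcal{I}[\bar k]$ as a quadratic in $\bar k$, use integration by parts on the cross term to turn it into $-2\bar k\,\mathbb{E}[h'|\mathcal{F}^Y_t]$, and minimize. The only cosmetic difference is that the paper phrases the minimization via first and second derivatives rather than the vertex formula $-B/A$, and invokes boundedness of $h$ (rather than decay of $p$) to discard the boundary term.
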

    
    The proof is given in \cref{prf:gain}. Note that $\mathbb{E}[ \psi^2(X_t,t) | \mathcal{F}^{Y}_t]$ is itself a Fisher information, namely that of the 1-parameter model $p_{\theta}(x,t)=p(x-\theta,t)$ at $\theta=0$.
    
    \begin{example}[linear-Gaussian case, continued]
    It is easy to verify that in the linear-Gaussian case \cref{eq:linear_problem}, $\bar{k}^*(t)$ from \cref{eq:optimal_k} yields the Kalman gain, which in turn makes $\varepsilon^*(x,t)$ and thus the objective functional zero. Observe that
	\begin{align}
	     h'(x,t) = c \quad & \implies \quad \mathbb{E}[ h'(X_t,t) | \mathcal{F}^{Y}_t]= c, \\
	     \psi(x,t) = -(x-\hat{\mu}_t)/\hat{\rho}_t \quad & \implies \quad \mathbb{E}[ \psi^2(X_t,t) | \mathcal{F}^{Y}_t]  = 1/ \hat{\rho}_t,
	\end{align}
    which gives the Kalman gain $\bar{k}^*(t)= c \hat{\rho}_t $. 
    \end{example}
    
    In general (i.e., the nonlinear or non-Gaussian case), $\psi(x,t)$ and the Fisher information $\mathbb{E}[ \psi^2(X_t,t) | \mathcal{F}^{Y}_t]$ need to be estimated from samples.
    Estimators for these quantities can be found, e.g., in \cite{Bhattacharya1967, Cao2020, Fabian1973,  Schuster1969}.
    The proposition above can be generalized to richer classes of functions $\mathcal{K}$, e.g., functions of the form $k(x,t) = \sum_{j} a_j(t) \varphi_j (x,t)$, where $\{\varphi_j\}_j$ is an appropriate set of basis functions and $\{a_j\}_j$ are some coefficients. 

\subsection{Providing freedom to alleviate weight degeneracy}\label{subsec:weight_degeneracy}
    Weight decay is a major issue among the weighted PFs.
    As discussed in \cref{subsec:PFs}, in order to have less weight degeneracy, $\mathbb{E}[W_t^2]$ should remain small.
    In \cref{subsec:weights,subsec:dW_optimal}, we derived the optimal importance weight $W_t^*$ and its stochastic dynamics $dW_t^*$ under the constraint that particle distribution is given, i.e., particle dynamics is fixed. 
    Here we would like to derive a general formula for $d\mathbb{E}[W_t^2]$ corresponding to the weight dynamics of form \cref{eq:HPF_dW} in terms of \{$\gamma, \varepsilon,\zeta$\}.
    By Itô's formula, we have 
    \begin{equation}\label{eq:dW^2}
	    dW_t^2 = 2 W_t^2 \big( \gamma (S_t,t) dt +  \varepsilon(S_t,t) dY_t +  \zeta(S_t,t) dB_t + \tfrac{1}{2}  \varepsilon^2(S_t,t) dt + \tfrac{1}{2}   \zeta^2(S_t,t) dt \big).
	\end{equation}
	Applying Fubini’s theorem to the integral form of SDE above would suffice for deriving $d\mathbb{E}[W_t^2]$.
	However, the presence of $W_t^2$, which multiplies functions of $S_t$, as well as $dY_t$, which in turn depends on the hidden process $X_t$, make the analysis of $d\mathbb{E}[W_t^2]$ complicated.
	It is interesting to note that studying $d\log(W_t)$ does not have the first issue because it no longer involves $W_t$ explicitly.
	Observe that by Itô's formula
	\begin{equation}\label{eq:dlogW}
	    d\log(W_t) = \gamma (S_t,t) dt + \varepsilon(S_t,t) dY_t   +  \zeta(S_t,t) dB_t - \tfrac{1}{2} \varepsilon^2(S_t,t) dt - \tfrac{1}{2} \zeta^2(S_t,t) dt.
	\end{equation}
	To overcome the second issue, it turns out that if we use the fact that the innovation process is a BM, we will get a more useful formula as follows.

    \begin{proposition}\label{prop:weight_degeneracy}
        Consider the filtering problem \cref{eq:hidden}--\cref{eq:observation}. Let $(S_t, W_t)$ be a pair process such that $W_t$ solves the SDE \cref{eq:HPF_dW} under the assumptions \ref{asmp:initial}, \ref{asmp:BM}, and \ref{asmp:functions_W} while $S_t$ is a progressively measurable process with respect to $\mathcal{F}^B_t \vee \mathcal{F}^Y_t \vee \sigma(S_0)$. 
        Suppose that $h(X_{\cdot},\cdot) \in \mathbb{L}^1(0,t)$  and $\mathbb{E}[W_t^2],\mathbb{E}[|\log W_t|] < \infty$ for all $t\geq 0$.
        Then
        \begin{gather}
        	d \mathbb{E}[W_t^2]
        	 = 2 \mathbb{E}\big[W_t^2 \big( \gamma (S_t,t) +  \varepsilon(S_t,t) \hat{h}_t  + \tfrac{1}{2} \varepsilon^2(S_t,t) +  \tfrac{1}{2} \zeta^2(S_t,t) \big) \big]\,dt, \label{eq:dE[W^2]_general} \\
        	d \mathbb{E}[\log (W_t)]  = \mathbb{E} \big[ \gamma (S_t,t) + \varepsilon(S_t,t) \hat{h}_t - \tfrac{1}{2} \varepsilon^2(S_t,t)  - \tfrac{1}{2} \zeta^2(S_t,t) \big] \, dt. \label{eq:dE[log W]_general}
        \end{gather}
    \end{proposition}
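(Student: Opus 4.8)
The plan is to pass to integral form in the It\^o differentials \cref{eq:dW^2} and \cref{eq:dlogW}, take expectations, discard the two stochastic-integral terms (once shown to be mean-zero), and then use Fubini and differentiation in $t$. The bookkeeping hinges on rewriting the $dY_t$ contribution through the innovations process: since $\nu_t := Y_t - \int_0^t \hat{h}_s\,ds$ is an $\mathcal{F}^Y_t$-Brownian motion, we have $dY_t = d\nu_t + \hat{h}_t\,dt$, and substituting this into \cref{eq:dW^2} and \cref{eq:dlogW} moves the term $\varepsilon(S_t,t)\hat{h}_t$ into the drift while leaving martingale parts $\int_0^\cdot 2W_s^2\varepsilon(S_s,s)\,d\nu_s + \int_0^\cdot 2W_s^2\zeta(S_s,s)\,dB_s$ and $\int_0^\cdot \varepsilon(S_s,s)\,d\nu_s + \int_0^\cdot \zeta(S_s,s)\,dB_s$, respectively. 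With $W_0 = 1$ from \ref{asmp:initial}, once these integrals are shown to have zero mean, Fubini applied to the remaining drift integral and differentiation in $t$ give \cref{eq:dE[W^2]_general} and \cref{eq:dE[log W]_general} directly; the $\log W_t$ case is the easier of the two since its integrands carry no $W_t$ factor.

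The heart of the argument is therefore the zero-mean property of those stochastic integrals. Set $\mathcal{G}_t := \mathcal{F}^B_t \vee \mathcal{F}^Y_t \vee \sigma(S_0)$. Because $S_t$ is progressively measurable with respect to $\mathcal{G}_t$ and $\varepsilon,\zeta \in C^{1,0}$ by \ref{asmp:functions_W}, the coefficient processes $\varepsilon(S_t,t)$ and $\zeta(S_t,t)$ are $\mathcal{G}_t$-progressively measurable, and $W_t$, the solution of \cref{eq:HPF_dW} driven by $S$, $Y$, $B$ with $W_0=1$, is a continuous $\mathcal{G}_t$-adapted semimartingale, so all the stochastic integrals are well defined. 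Both $B_t$ and $\nu_t$ are $\mathcal{G}_t$-martingales: for $B_t$ this is immediate from \ref{asmp:BM}, its increments after time $t$ being independent of $\mathcal{F}^B_t \vee \mathcal{F}^Y_t \vee \sigma(S_0)$; for $\nu_t$ one uses that it is already an $\mathcal{F}^Y_t$-Brownian motion and that the enlargement of $\mathcal{F}^Y_t$ by $\mathcal{F}^B_t \vee \sigma(S_0)$ is by a $\sigma$-field independent of $\mathcal{F}^Y_\infty$ (by \ref{asmp:initial}--\ref{asmp:BM}), so no ``information drift'' is created and the martingale property is preserved. Hence both stochastic integrals are continuous local $\mathcal{G}_t$-martingales.

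To upgrade local to true martingale I would localize. With $\tau_n$ the first time $W_t^2 + |\log W_t| + \int_0^t W_s^4(\varepsilon^2+\zeta^2)(S_s,s)\,ds$ exceeds $n$ (a $\mathcal{G}_t$-stopping time, with $\tau_n \uparrow \infty$ since $W_t$ solves its SDE), the stopped integrals are genuine martingales, so $\mathbb{E}[W_{t\wedge\tau_n}^2] = 1 + \mathbb{E}\big[\int_0^{t\wedge\tau_n} 2W_s^2(\gamma + \varepsilon\hat{h}_s + \tfrac12\varepsilon^2 + \tfrac12\zeta^2)(S_s,s)\,ds\big]$, and likewise for $\log W_t$. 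Letting $n\to\infty$ and using the hypotheses $\mathbb{E}[W_t^2] < \infty$, $\mathbb{E}[|\log W_t|] < \infty$, and $h(X_\cdot,\cdot) \in \mathbb{L}^1(0,t)$ — the last giving $\mathbb{E}|\hat{h}_s| \le \mathbb{E}|h(X_s,s)| < \infty$ — lets dominated/monotone convergence carry the limit through both sides and Fubini interchange $\mathbb{E}$ with $\int_0^t ds$; differentiating the resulting identities in $t$ produces the two claimed formulas.

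The step I expect to be the main obstacle is making the zero-mean claim watertight: one must verify carefully that the innovations process remains a martingale after the filtration is enlarged to carry the independent noise $B$ and the initial particle randomness $S_0$ (so that the $d\nu_t$ integral genuinely has vanishing expectation), and one must check that the moment conditions in the statement — which include no a priori $\mathbb{L}^2$ control of the integrands $W_s^2\varepsilon(S_s,s)$ and $W_s^2\zeta(S_s,s)$ — are nonetheless exactly enough to run the localization and dominated-convergence passage, and hence to discard the stochastic integrals and apply Fubini.
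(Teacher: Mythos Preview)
Your proposal is correct and follows essentially the same route as the paper: substitute $dY_t = dI_t + \hat{h}_t\,dt$ into \cref{eq:dW^2}--\cref{eq:dlogW}, pass to integral form, take expectations, invoke the zero-mean property of the It\^o integrals against $B$ and the innovation process, and apply Fubini. The paper's proof is considerably terser and does not spell out the filtration-enlargement and localization issues you flag; your more careful treatment of why the stochastic integrals genuinely have zero expectation is a welcome addition rather than a deviation.
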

    
    The proof appears in \cref{prf:dEW2}.
    Notice that due to the nonnegativity of $\text{Var}[W_t]$, we have $\mathbb{E}[W_t^2] \geq 1$ and due to $\log(x) \leq x -1 $, we have $ \mathbb{E}[\log(W_t)] \leq 0$.
    It can be shown (by counterexample) that $\mathbb{E}[\log(W_t)]$ is not necessarily a proper measure for degeneracy.
    However, the formulas above are insightful for investigating the possibility of exploiting the freedom given by the terms $\gamma,\varepsilon,\zeta$ in order to find other steady-state solutions for $\mathbb{E}[W_t^2]$ besides the minimum-variance solution in \cref{prop:dW_optimal}.
    We defer this investigation to future research, but to illustrate the usefulness of the formulas above, we apply them to the weights in the BPF \cref{eq:BPF_dW}:
    \begin{gather}
    	d \mathbb{E}[{W_t^{\text{\tiny B}}}^2]
    	 = \mathbb{E}\big[{W_t^{\text{\tiny B}}}^2( h(S_t^{\text{\tiny B}},t) - \hat{h}_t )^2 \big]\,dt, \label{eq:dE[W^2]_BPF} \\
    	d \mathbb{E}[\log (W_t^{\text{\tiny B}})]  = - \tfrac{1}{2} \mathbb{E} \big[ ( h(S_t^{\text{\tiny B}},t) - \hat{h}_t )^2   \big] \, dt. \label{eq:dE[ln W]_BPF}
    \end{gather}
    Equation \cref{eq:dE[W^2]_BPF} implies that in the BPF, we always have $d \mathbb{E}[{W_t^{\text{\tiny B}}}^2] \geq 0$, and hence the number of effective particles will inevitably decay over time.
    Note also that if the observation process is $m$-dimensional, the drift coefficient on the right-hand side of \cref{eq:dE[W^2]_BPF} is given by $\mathbb{E}[{W_t^{\text{\tiny B}}}^2 \| h(S_t^{\text{\tiny B}},t) - \hat{h}_t \|^2]$, and therefore the time constant of the weight decay will scale with $1/m$ (compare the analysis with \cite[section 3.1.1]{Surace2019}).

\section{Discussion}\label{sec:conclusion}
Existing particle filters fall into two distinct types. 
Unweighted PFs (such as the FPF) assimilate new data by moving around particles while keeping the weights associated to each particle fixed, whereas weighted PFs (such as the BPF) assimilate new data by reweighing particles. 
In this paper, we proposed a unifying framework for these types of PFs. 
Our proposed hybrid filter allows particles to be moved as well as reweighed in response to new observations. 
This gives a lot of freedom on how to design a PF. 
This freedom obviously needs to be constrained in order to make sure that the empirical distribution of weighted particles effectively converges to the filtering distribution (i.e., to be asymptotically exact).
The necessary conditions are summarized by \cref{eq:PDE_dY}--\cref{eq:PDE_dt} in the unifying theorem. 

Even after having constrained the freedom of the particle and weight dynamics to satisfy the targeting condition, there is still substantial freedom that could be exploited. 
An interesting extension of the present work will be to determine whether there exists a “sweet spot” where the strengths of unweighted PFs (i.e., the absence of the weight collapse problem) could be combined with the strengths of simple weighted PFs such as the BPF (where the solution of \cref{eq:PDE_dY}--\cref{eq:PDE_dt} is trivial). 
Practically, this would amount to defining a cost function which combines the cost associated to the severity of the weight decay as well as the cost for computing the solutions of \cref{eq:PDE_dY}--\cref{eq:PDE_dt}. We leave this question for further work. 

Another extension of the present work would be to relax the strong assumptions made in the SDEs \cref{eq:HPF_dS}--\cref{eq:HPF_dW}. 
Indeed, in \cref{eq:HPF_dS}, it is assumed that the particle dynamics does not involve $W_t$ explicitly and \cref{eq:HPF_dW} assumes that the coefficients in the weight dynamics are linear in $W_t$.
In this construction, $S_t$ is the main process, but $W_t$ can be thought of as an auxiliary process driven by $S_t$ (recall \cref{eq:dlogW}) and transforming the distribution of $S_t$ into the filtering distribution.
These assumptions were made to simplify the expression for the necessary conditions in the unifying theorem. 
However, if we relax them, we will obtain even more freedom on the choice of particle and weight dynamics that could be leveraged to increase the chance of obtaining a sweet spot as described in the previous paragraph.
For example, we could enforce a drift term in the weight dynamics that pulls the weight back to unity and compensate for this specific choice by appropriate functions in the particle dynamics. 
This could be interpreted as a smooth resampling procedure, unlike classical resampling, where particles and weights have the undesirable feature of changing abruptly at the resampling times.

Finally, this paper is entirely focused on necessary conditions that the hybrid particle filter needs to satisfy in order to target the filtering distribution.
An obvious next question will be to determine sufficient conditions that need to be satisfied by the model such that \eqref{eq:PDE_dY}--\eqref{eq:PDE_dt} guarantee that the targeting condition is met for an extended period of time (i.e., the converse of \cref{thm:unifying}). 
The main difficulty will be to establish conditions under which the solutions of \eqref{eq:PDE_dY}--\eqref{eq:PDE_dt} make the SDEs \eqref{eq:HPF_dS}--\eqref{eq:HPF_dW} well-posed.
This question has been partly addressed in \cite{Pathiraja2020} for some special cases but remains open in the general case considered here.
Furthermore, additional work is required to establish sufficient conditions under which a PF with a finite number $N$ of particles has a uniformly bounded error. 
Indeed, at the end of \cref{subsec:class}, we saw that in a simple linear-Gaussian case, the variance of the unweighted particles can grow exponentially if a specific parameter is above a given threshold (which is not recognized by the necessary conditions). 
This feature is clearly undesirable when the number of particles is finite since it implies that the number of samples that effectively support the filtering distribution decreases with time, which is similar to what occurs in the BPF.
It would therefore be desirable to derive sufficient conditions that guarantee the stability of the filter. 

\section{Proofs}\label{sec:proofs}
This section presents all the proofs of our results.
In a nutshell, the key results follow directly from the targeting condition \cref{eq:targets} by evaluating different types of test functions $\phi$, specifically, indicator function $\phi (x)=\mathds{1}_{B} (x)$, constant function $\phi (x) =1$, and $\phi \in C^2_k$, which lead to \cref{eq:weight_Radon-Nikodym}, \cref{eq:zero-mean}, and \cref{eq:PDE_dY}--\cref{eq:PDE_dt}, respectively. 

\subsection{Proof of the Radon--Nikodym characterization of the weight}\label{prf:weights}
\begin{proof}[Proof of \cref{thm:weights}] $\,$
    \begin{enumerate}[label=\Roman*.,leftmargin=*]
        \item By the targeting condition \cref{eq:targets}, we have for all integrable functions $\phi$
        \begin{align}
        	\mathbb{E}[W_t \phi(S_t)|\mathcal{F}_t^Y]
        	& = \mathbb{E}\Big[\mathbb{E}[W_t\phi(S_t)|\mathcal{F}_t^Y,S_t] \Big| \mathcal{F}_t^Y \Big] \label{eq_setp_cond} \\
        	& = \mathbb{E}\Big[\phi(S_t) \mathbb{E}[W_t|\mathcal{F}_t^Y,S_t] \Big| \mathcal{F}_t^Y \Big] \\
        	& = \int_{\mathbb{R}} \phi(x) \mathbb{E}[W_t|\mathcal{F}_t^Y,S_t=x] Q_t (dx) \\
        	&= \int_{\mathbb{R}} \phi(x) P_t(dx) = \mathbb{E}[\phi(X_t)|\mathcal{F}_t^Y].
    	\end{align}
    	Let $B \in \mathcal{B}(\mathbb{R})$ be a Borel subset. Take $\phi (x) = \mathds{1}_{B} (x) $ the indicator function of $B$; then
    	\begin{equation}
    	    \int_{B} \mathbb{E}[W_t|\mathcal{F}_t^Y,S_t=x] Q_t (dx) =  P_t(B),
    	\end{equation}
	which shows the first claim \cref{eq:weight_Radon-Nikodym}.
	\item This follows directly from Lemma \ref{lemma:min_var} just below.
    \end{enumerate}
    \end{proof}
    \begin{lemma}\label{lemma:min_var}
        Let $(\Omega, \mathcal{F}, P)$ be a probability space and $W^*$ a $\mathcal{G}$-measurable random variable defined on this space. Suppose $\mathcal{G} \subseteq \mathcal{F}$
    	and $\mathbb{E}[{W^*}^2] < \infty $. Then $W^*$ is the (a.s. unique) solution to the optimization problem 
    	\begin{align}
    	\begin{aligned}
        	& \underset{W}{\text{minimize}}
        	& & \mathbb{E}[W^2] \\
        	& \text{subject to}
        	& & \mathbb{E}[W|\mathcal{G}] = W^* \quad a.s.
    	\end{aligned}
    	\end{align}
	\end{lemma}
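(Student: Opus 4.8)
The plan is to recognize Lemma~\ref{lemma:min_var} as the familiar statement that conditional expectation is the orthogonal projection in $L^2$ onto the subspace of $\mathcal{G}$-measurable square-integrable functions, so that every feasible $W$ sits ``above'' $W^*$ via a Pythagorean identity. Concretely, the argument splits $W = W^* + (W-W^*)$, shows the two summands are $L^2$-orthogonal using that $\mathbb{E}[W-W^*\mid\mathcal{G}]=0$, and reads off both the inequality and the equality case.

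First I would verify that the constraint set is non-empty and in fact that $W^*$ is itself feasible: since $W^*$ is $\mathcal{G}$-measurable, $\mathbb{E}[W^*\mid\mathcal{G}]=W^*$ a.s., and $\mathbb{E}[{W^*}^2]<\infty$ by hypothesis, so $W^*$ is an admissible competitor. Next, given an arbitrary feasible $W$ with $\mathbb{E}[W\mid\mathcal{G}]=W^*$ a.s., I would dispose of the trivial case $\mathbb{E}[W^2]=\infty$ (where the claimed inequality holds vacuously) and otherwise assume $W\in L^2(\mathcal{F})$. Setting $R:=W-W^*\in L^2$, the constraint yields $\mathbb{E}[R\mid\mathcal{G}]=0$ a.s. Expanding the square,
\[
\mathbb{E}[W^2] = \mathbb{E}[{W^*}^2] + 2\,\mathbb{E}[W^*R] + \mathbb{E}[R^2],
\]
and the cross term vanishes: $W^*R\in L^1$ by Cauchy--Schwarz, $W^*$ is $\mathcal{G}$-measurable, so by the pull-out and tower properties $\mathbb{E}[W^*R] = \mathbb{E}\big[W^*\,\mathbb{E}[R\mid\mathcal{G}]\big] = 0$. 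Hence $\mathbb{E}[W^2] = \mathbb{E}[{W^*}^2] + \mathbb{E}[R^2] \ge \mathbb{E}[{W^*}^2]$, which is optimality of $W^*$; and equality holds iff $\mathbb{E}[R^2]=0$, i.e.\ $W = W^*$ almost surely, giving the stated a.s.\ uniqueness.

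There is no genuine obstacle here: the only points that require any care are the integrability bookkeeping (reducing to $W\in L^2$ so that the square expansion and the cross-term manipulation are justified) and invoking the correct form of the pull-out property of conditional expectation for the $\mathcal{G}$-measurable factor $W^*$.
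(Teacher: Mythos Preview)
Your proof is correct and follows essentially the same approach as the paper: both derive the Pythagorean identity $\mathbb{E}[W^2]=\mathbb{E}[{W^*}^2]+\mathbb{E}[(W-W^*)^2]$ via the tower property and $\mathcal{G}$-measurability of $W^*$, then read off optimality and uniqueness. Your version is marginally more careful about integrability (handling the $\mathbb{E}[W^2]=\infty$ case and justifying $W^*R\in L^1$), and you conclude $R=0$ a.s.\ directly from $\mathbb{E}[R^2]=0$ whereas the paper routes this through Chebyshev's inequality, but these are cosmetic differences.
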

    \begin{proof}
        $\mathbb{E} [ W^2]$ can be written as
		\begin{align}
		\mathbb{E} [ W^2 ]
		& = \mathbb{E} [ (W-W^*)^2 ] - \mathbb{E} [ {W^*}^2 ] + 2 \mathbb{E} [ W W^* ] \\
		& = \mathbb{E} [ (W-W^*)^2 ] - \mathbb{E} [ {W^*}^2 ] + 2 \mathbb{E} \big[ \mathbb{E} [W W^*|\mathcal{G}] \big] \\
		& = \mathbb{E} [ (W-W^*)^2 ] - \mathbb{E} [ {W^*}^2 ] + 2 \mathbb{E} \big[ W^* \mathbb{E} [W|\mathcal{G}] \big] \\
		& =  \mathbb{E} [ (W-W^*)^2 ] + \mathbb{E} [ {W^*}^2 ].
		\end{align}
		As $\mathbb{E}[{W^*}^2]$ is fixed, to minimize $\mathbb{E} [ W^2 ]$, we have to minimize $\mathbb{E} [ (W-W^*)^2 ]$. This is attained by any random variable $W_{\text{opt}}$ such that $\mathbb{E} [ (W_{\text{opt}}-W^*)^2 ] = 0.$	Then Chebyshev’s inequality implies that
		\begin{equation}
			\mathrm{Pr} ( |W_{\text{opt}}-W^*| \geq \lambda ) = 0 \quad \text{for all } \lambda > 0.
		\end{equation}
		In other words, 
		$W_{\text{opt}} = W^*$ a.s.
    \end{proof}

\subsection{Proof of the unifying theorem}\label{prf:unifying}
This subsection is devoted to the proof of \cref{thm:unifying}. A key ingredient for the proof is the next lemma, whose first two assertions come from Lemma 2 in \cite{FPF2011}. It allows us to not only interchange conditional expectations with integrals but also adapt the $\sigma$-algebra accordingly, which makes it distinct and stronger from the normal conditional Fubini theorem.
\begin{lemma}\label{lemma:interchange_integrals}
    Take the system of SDEs \cref{eq:HPF_dS}--\cref{eq:HPF_dW} for the pair process $(S_t, W_t)$ under the assumptions \ref{asmp:initial}-\ref{asmp:functions_W}.
    Let $F(x,w,t)$ be an $\mathbb{R}$-valued measurable function such that
    $ F(S_\cdot,W_\cdot,\cdot) \in \mathbb{L}^2(0,t)$. Then
    \begin{align}
        \mathbb{E}\Big[ \int_{0}^{t} F(S_s,W_s,s) \, ds \Big|\mathcal{F}^{Y}_t \Big] & = \int_{0}^{t} \mathbb{E}\big[F(S_s,W_s,s) \big|\mathcal{F}^{Y}_s \big] \, ds, \label{eq:interchange_integral_dt}\\
        \mathbb{E}\Big[ \int_{0}^{t} F(S_s,W_s,s) \, dY_s \Big|\mathcal{F}^{Y}_t \Big] & = \int_{0}^{t} \mathbb{E}\big[ F(S_s,W_s,s) \big|\mathcal{F}^{Y}_s \big] \, dY_s, \label{eq:interchange_integral_dY} \\
        \mathbb{E}\Big[ \int_{0}^{t} F(S_s,W_s,s) \, dB_s \Big|\mathcal{F}^{Y}_t \Big] &= 0. \label{eq:interchange_integral_dB}
    \end{align}
\end{lemma}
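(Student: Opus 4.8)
The plan is to read the three identities as three faces of the same principle --- conditioning on $\mathcal{F}^{Y}_t$ commutes with integration in $s$ --- with the first two being Lemma~2 of \cite{FPF2011} transported to the pair process $(S_t,W_t)$, and the third a short martingale argument. Before anything else I would record the one structural fact that makes all of this go through: by assumption \ref{asmp:SDEs} the system \cref{eq:HPF_dS}-\cref{eq:HPF_dW} is well posed and its coefficients depend only on $(S_t,t)$, so the strong solution $(S_t,W_t)$ is a measurable functional of the driving paths $(Y_{[0,t]},B_{[0,t]})$ and of $S_0$ alone --- it carries \emph{no explicit dependence} on the hidden process $X_t$ and is adapted to $\mathcal{F}^{B}_t\vee\mathcal{F}^{Y}_t\vee\sigma(S_0)$. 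Combined with assumption \ref{asmp:BM}, which makes $B$ independent of $\sigma(B^{\text{\tiny X}},B^{\text{\tiny Y}},X_0,S_0)$ (a $\sigma$-algebra containing $\mathcal{F}^{Y}_\infty$), and with the hypothesis $F(S_\cdot,W_\cdot,\cdot)\in\mathbb{L}^2(0,t)$ (which turns the $dB$- and $dY$-integrals into square-integrable martingales and legitimizes Fubini on the finite interval $[0,t]$), this is the complete list of ingredients.

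For \cref{eq:interchange_integral_dt} I would first apply the ordinary conditional Fubini theorem --- justified by progressive measurability of $(s,\omega)\mapsto F(S_s,W_s,s)$ and $\mathbb{E}\int_0^t|F(S_s,W_s,s)|\,ds<\infty$ --- to obtain $\int_0^t\mathbb{E}[F(S_s,W_s,s)\mid\mathcal{F}^{Y}_t]\,ds$, and then invoke the pointwise identity $\mathbb{E}[F(S_s,W_s,s)\mid\mathcal{F}^{Y}_t]=\mathbb{E}[F(S_s,W_s,s)\mid\mathcal{F}^{Y}_s]$ for $s\le t$. This last step is precisely Lemma~2 of \cite{FPF2011}: its proof passes to the reference measure $\mathbb{Q}$ under which $Y$ is a Brownian motion independent of $(X,B,X_0,S_0)$, exploits there the independence of the post-$s$ increments of $Y$ from everything else, and transfers the conclusion back to $\mathbb{P}$ through the Kallianpur--Striebel formula; it applies verbatim in our setting once we feed in the structural fact that $F(S_s,W_s,s)$ is a functional of $(Y_{[0,s]},B_{[0,s]},S_0)$. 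Identity \cref{eq:interchange_integral_dY} I would then get by the same reduction after splitting $dY_s=\hat h_s\,ds+d\nu_s$ with $\nu$ the innovation Brownian motion: the $\hat h_s\,ds$ piece is \cref{eq:interchange_integral_dt}, and for the $d\nu$ piece one uses that $\int_0^{\cdot}F\,d\nu$ is a martingale together with the same ``future observations carry no extra information'' reduction.

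For \cref{eq:interchange_integral_dB} I would argue directly under $\mathbb{P}$. Put $M_t:=\int_0^tF(S_s,W_s,s)\,dB_s$ and enlarge the filtration to $\widetilde{\mathcal{G}}_t:=\mathcal{F}^{B}_t\vee\sigma(B^{\text{\tiny X}},B^{\text{\tiny Y}},X_0,S_0)$. Since $B$ is a Brownian motion in its own filtration and is independent of $\sigma(B^{\text{\tiny X}},B^{\text{\tiny Y}},X_0,S_0)$, its increments after time $t$ are independent of $\widetilde{\mathcal{G}}_t$, so $B$ is still a Brownian motion for $(\widetilde{\mathcal{G}}_t)$; because $F(S_s,W_s,s)$ is $\mathcal{F}^{B}_s\vee\mathcal{F}^{Y}_s\vee\sigma(S_0)$-measurable, hence $\widetilde{\mathcal{G}}_s$-measurable, and lies in $\mathbb{L}^2(0,t)$, the process $M$ is a square-integrable $(\widetilde{\mathcal{G}}_t)$-martingale with $M_0=0$, whence $\mathbb{E}[M_t\mid\widetilde{\mathcal{G}}_0]=0$. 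Finally $\mathcal{F}^{Y}_t\subseteq\mathcal{F}^{Y}_\infty\subseteq\sigma(B^{\text{\tiny X}},B^{\text{\tiny Y}},X_0)\subseteq\widetilde{\mathcal{G}}_0$, so the tower property yields $\mathbb{E}[M_t\mid\mathcal{F}^{Y}_t]=\mathbb{E}[\mathbb{E}[M_t\mid\widetilde{\mathcal{G}}_0]\mid\mathcal{F}^{Y}_t]=0$, which is \cref{eq:interchange_integral_dB}.

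I expect the genuinely delicate point to be the pointwise reduction $\mathbb{E}[F(S_s,W_s,s)\mid\mathcal{F}^{Y}_t]=\mathbb{E}[F(S_s,W_s,s)\mid\mathcal{F}^{Y}_s]$ behind \cref{eq:interchange_integral_dt}-\cref{eq:interchange_integral_dY}: under $\mathbb{P}$ the future observation increments $Y_u-Y_s$ with $u>s$ are \emph{not} independent of $\mathcal{F}^{Y}_s$ (they carry information about $X_s$, hence are correlated with $(S_s,W_s)$), so the naive ``adding an independent $\sigma$-algebra does not change the conditional expectation'' argument does not apply and one genuinely needs the passage to $\mathbb{Q}$ together with Kallianpur--Striebel --- which is exactly why I would import \cite[Lemma~2]{FPF2011} here rather than reprove it, supplying only the structural verification that $(S_t,W_t)$ has no explicit $X_t$-dependence. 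By contrast the martingale argument for \cref{eq:interchange_integral_dB}, the Fubini steps, and the $\mathbb{L}^2$ bookkeeping are all routine.
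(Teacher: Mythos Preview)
Your proposal is correct. For \cref{eq:interchange_integral_dt}--\cref{eq:interchange_integral_dY} you follow the paper's route: record that $(S_t,W_t)$ is adapted to $\mathcal{F}^B_t\vee\mathcal{F}^Y_t\vee\sigma(S_0)$ and invoke Lemma~2 of \cite{FPF2011}; your innovation-process decomposition for \cref{eq:interchange_integral_dY} is extra commentary not present in the paper, which simply cites that lemma for both identities without unpacking it.

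For \cref{eq:interchange_integral_dB} you take a genuinely different route. The paper argues via a Riemann-sum approximation of the It\^o integral, passes the conditional expectation through the limit by dominated convergence, and then factors each summand using that the increment $B_{t_{i+1}}-B_{t_i}$ is independent of both $\mathcal{F}^Y_t$ and $(S_{t_i},W_{t_i})$ and has mean zero. Your initial-enlargement martingale argument---show that $B$ remains a Brownian motion for $\widetilde{\mathcal G}_t=\mathcal{F}^B_t\vee\sigma(B^{\text{\tiny X}},B^{\text{\tiny Y}},X_0,S_0)$, deduce that $M_t=\int_0^t F\,dB$ is a square-integrable $(\widetilde{\mathcal G}_t)$-martingale with $\mathbb{E}[M_t\mid\widetilde{\mathcal G}_0]=0$, and condition down via $\mathcal{F}^Y_t\subseteq\widetilde{\mathcal G}_0$---is cleaner: it avoids the discretization and the delicate interchange of limit and conditional expectation, and it makes explicit \emph{why} the result holds (the entire observation $\sigma$-algebra is already contained in the time-zero enlargement). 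The paper's approach is more elementary in flavour but leaves the justification of the limit exchange somewhat implicit.
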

\begin{proof}
    The process $F(S_t,W_t,t)$ is $\mathcal{F}_t^Y \vee \mathcal{F}_t^B \vee \sigma(S_0)$-measurable. Statements \cref{eq:interchange_integral_dt} and \cref{eq:interchange_integral_dY} then follow directly from Lemma 2 in \cite{FPF2011}. The last claim \cref{eq:interchange_integral_dB} is similar to the zero-mean property of the Itô integral and has a similar proof, yet it involves conditional expectation. For the Itô integral, we have
    \begin{equation}
        \int_{0}^{t} F(S_s,W_s,s) dB_s =  \lim_{n \to \infty} \sum_{i=0}^{n-1} F(S_{t_i},W_{t_i},t_i) (B_{t_{i+1}} - B_{t_i})
    \end{equation}
    in probability, where $\{ t_i \}_{i=0}^{n}$ is a partition of $[0,t]$ with $\max_i (t_{i+1} - t_{i}) \to  0$ as $n \to \infty$. Consequently, by the dominated convergence theorem for conditional expectations 
    \begin{align}
	    \mathbb{E}\Big[ \int_{0}^{t} F(S_s,W_s,s) dB_s \Big|  \mathcal{F}^{Y}_t \Big]
	    =   
    	\lim_{n \to \infty} \mathbb{E}\Big[ \sum_{i=0}^{n-1} F(S_{t_i},W_{t_i},t_i) (B_{t_{i+1}} - B_{t_i})  \Big|\mathcal{F}^{Y}_t \Big] & \\
    	= 
    	\lim_{n \to \infty} \sum_{i=0}^{n-1} \mathbb{E}\big[ F(S_{t_i},W_{t_i},t_i)\big| \mathcal{F}^{Y}_t \big] \mathbb{E}\big[ B_{t_{i+1}} - B_{t_i} \big] = 0, &
	\end{align}
	where we used the fact that the Brownian motion increment $B_{t_{i+1}} - B_{t_i}$  is independent of $\mathcal{F}^{Y}_t$ as well as the random values $S_{t_i}$ and $W_{t_i}$ at time $t_i$ and its mean is zero.
\end{proof}

Given the lemma above, we are now ready to prove \cref{thm:unifying}.
\begin{proof}[Proof of \cref{thm:unifying}]
    By the targeting assumption for all $0< t<T$ as well as the initial condition, we know that
    \begin{equation}\label{eq:targerts-time}
        \mathbb{E}[W_t \phi(S_t) | \mathcal{F}^{Y}_t] = \mathbb{E}[\phi(X_t) | \mathcal{F}^{Y}_t] \quad\quad  (0 \leq t<T)
    \end{equation}
    holds a.s. for any measurable test function $\phi$ with $\mathbb{E}[|\phi(X_t)|]<\infty$, in particular for any integrable $\phi \in C^2$, which allows us to write an equality for the stochastic differential of these processes in the Itô sense:
    \begin{equation}\label{eq:targerts-time-d}
        d \mathbb{E}[W_t \phi(S_t) | \mathcal{F}^{Y}_t] = d \mathbb{E}[\phi(X_t) | \mathcal{F}^{Y}_t] \quad\quad  (0 \leq t<T).
    \end{equation}
    The right-hand side of expression above is given by \cref{eq:KSE_mom}, which consists of two terms multiplying $dY_t$ and $dt$. The plan is to compute the left-hand side in terms of the unknown functions $\{u,k,v,\gamma,\varepsilon,\zeta\}$ from SDEs \cref{eq:HPF_dS}--\cref{eq:HPF_dW}. It turns out that the left-hand side also consists of two terms multiplying $dY_t$ and $dt$ since we take conditional expectation with respect to $\mathcal{F}^{Y}_t$, and thus terms multiplying $dB_t$ vanish. Two fundamental ODEs (or PDEs in higher dimensions) finally follow from the fact that the terms multiplying $dY_t$ and $dt$ (more precisely, $\,dB_t^{\text{\tiny Y}}$ and $dt$) on each side of \cref{eq:targerts-time-d} are equal a.s. regardless of $\phi$.
    The proof is structured in three steps:
    \begin{enumerate}[label=(\alph*),leftmargin=*]
    	\item We first find the stochastic differential $d (W_t\phi(S_t) )$ from SDEs \cref{eq:HPF_dS}--\cref{eq:HPF_dW} using Itô's formula.
    	\item In order to obtain $d \mathbb{E}[ W_t\phi(S_t) | \mathcal{F}^{Y}_t ]$, we write the result of the previous step in integral form, take the conditional expectation of both sides with respect to $\mathcal{F}^{Y}_t$, use \cref{lemma:interchange_integrals} to interchange conditional expectations with integrals, and finally turn the result back into the differential form. 
    	\item We use the targeting assumption \cref{eq:targerts-time} to convert the conditional expectations that involve $(W_t,S_t)$ into posterior expectations (this becomes possible because of the special form that we take for the stochastic dynamics of the particle system). Finally, we investigate the implications of the equalities resulting from \cref{eq:targerts-time-d} for some class of test functions $\phi$. 
    \end{enumerate}
    Some aspects of our proof are inspired by the usual proof of the Fokker--Planck equation (see, e.g., \cite[Proof of Theorem 5.4]{Sarkka2019}) and the derivation of the FPF \cite{FPF2011, FPF2013}.
    
    \textit{Step} (a). Take any test function $\phi(x) \in C^2$. To obtain the stochastic differential of $W_t \phi(S_t)$ from the SDEs \cref{eq:HPF_dS}--\cref{eq:HPF_dW}, we apply Itô's formula 
	\begin{equation}\label{eq:d_W_phi_S}
		d  \big( W_t \phi(S_t) \big) = W_t  \mathcal{A}_t\phi(S_t) \,dt + W_t \mathcal{B}_t\phi(S_t) \,dY_t + W_t \mathcal{C}_t\phi(S_t) \,dB_t,
	\end{equation}
	where the linear operators \{$\mathcal{A}_t$, $\mathcal{B}_t$, $\mathcal{C}_t$\} have been defined as
	\begin{align}
		\mathcal{A}_t\phi(x) &:=  \gamma(x,t) \phi(x) + \big[u(x,t) + k(x,t)\varepsilon(x,t) + v(x,t) \zeta(x,t)\big] \phi'(x) \\
		           &\quad + \tfrac{1}{2} \big[ k(x,t)^2 + v(x,t)^2\big] \phi''(x) , \nonumber \\
		\mathcal{B}_t\phi(x) &:=  \varepsilon(x,t) \phi(x)  + k(x,t) \phi'(x),    \\
		\mathcal{C}_t\phi(x) &:=  \zeta(x,t) \phi(x) + v(x,t) \phi'(x).
	\end{align}
	
	\textit{Step} (b). Writing the SDE \cref{eq:d_W_phi_S} in integral form and taking the conditional expectation of both sides gives the following:
	\begin{multline}
		\mathbb{E}\big[W_t \phi(S_t)\big|\mathcal{F}^{Y}_t\big] =  \mathbb{E}\big[W_0  \phi(S_0)\big|\mathcal{F}^{Y}_t\big] 
		+ \mathbb{E}\Big[ \int_{0}^{t} W_s \mathcal{A}_s\phi(S_s) ds \Big|\mathcal{F}^{Y}_t \Big] \\
		\quad+ \mathbb{E}\Big[ \int_{0}^{t} W_s \mathcal{B}_s\phi(S_s) dY_s \Big|\mathcal{F}^{Y}_t \Big]
		+ \mathbb{E}\Big[ \int_{0}^{t} W_s \mathcal{C}_s\phi(S_s) dB_s \Big|\mathcal{F}^{Y}_t \Big].
	\end{multline}
	Now apply \cref{lemma:interchange_integrals} to interchange conditional expectations with integrals in the expression above, provided that $ W_\cdot \mathcal{A}_\cdot\phi(S_\cdot), W_\cdot \mathcal{B}_\cdot\phi(S_\cdot), W_\cdot \mathcal{C}_\cdot\phi(S_\cdot) \in \mathbb{L}^2(0,t)$. Moreover, the last term is zero by the third claim of the aforementioned lemma. Then
	\begin{multline}
		\mathbb{E}\big[W_t \phi(S_t)\big|\mathcal{F}^{Y}_t\big] =  \mathbb{E}\big[W_0  \phi(S_0)\big|\mathcal{F}^{Y}_0\big]\\ 
		+ \int_{0}^{t} \mathbb{E}\big[ W_s \mathcal{A}_s\phi(S_s) \big|\mathcal{F}^{Y}_s \big] ds 
		+ \int_{0}^{t} \mathbb{E}\big[ W_s \mathcal{B}_s\phi(S_s) \big|\mathcal{F}^{Y}_s \big] dY_s,
	\end{multline}
	which can be put in differential form as
	\begin{equation}\label{eq:d_E[W_phi_S|F]_1}
		d \mathbb{E}\big[W_t \phi(S_t)\big|\mathcal{F}^{Y}_t\big] = 
		\mathbb{E}\big[ W_t \mathcal{A}_t\phi(S_t) \big|\mathcal{F}^{Y}_t \big] dt + \mathbb{E}\big[ W_t \mathcal{B}_t\phi(S_t) \big|\mathcal{F}^{Y}_t \big] dY_t. 
	\end{equation}
	
	\textit{Step} (c). It becomes clear now why our ansatz \cref{eq:HPF_dS}--\cref{eq:HPF_dW} for the dynamics of the particle system is advantageous. 
	As the particle dynamics does not involve $W_t$ explicitly and the coefficients in the weight dynamics are linear in $W_t$, the arguments of the conditional expectations in \cref{eq:d_E[W_phi_S|F]_1} become linear in $W_t$.
	This allows us to use the targeting assumption \cref{eq:targerts-time}, which holds for any (integrable) measurable function, in order to convert conditional expectations involving $(S_t,W_t)$ to posterior expectations:
	\begin{equation}\label{eq:d_E[W_phi_S|F]_2}
		d \mathbb{E}[W_t \phi(S_t)|\mathcal{F}^{Y}_t] = \mathbb{E}[ \mathcal{A}_t\phi(X_t) |\mathcal{F}^{Y}_t ] dt 
		+ \mathbb{E}[ \mathcal{B}_t\phi(X_t) |\mathcal{F}^{Y}_t ] dY_t. 
	\end{equation}
	Now we have an expression for the left-hand side of \cref{eq:targerts-time-d} in terms of posterior expectations and we recall that the right-hand side is given by \cref{eq:KSE_mom}. As terms multiplying $dt$ and $dY_t$ (more precisely, $\,dB_t^{\text{\tiny Y}}$ and $dt$) on both sides match, we conclude that
	\begin{gather}
	    \mathbb{E}[ \mathcal{A}_t\phi(X_t) |\mathcal{F}^{Y}_t]  =	\mathbb{E} [\mathcal{L}_t\phi(X_t) | \mathcal{F}^{Y}_t] - \mathbb{E} [ \phi(X_t) (h(X_t,t) - \hat{h}_t)\hat{h}_t | \mathcal{F}^{Y}_t ] , \label{eq:match_dt}\\
	    \mathbb{E}[ \mathcal{B}_t\phi(X_t)|\mathcal{F}^{Y}_t]  = \mathbb{E} [ \phi(X_t) (h(X_t,t) - \hat{h}_t) | \mathcal{F}^{Y}_t ] \label{eq:match_dY}
	\end{gather}	
	a.s. The equalities above hold for any $\phi\in C^2$. In particular, $\phi=1$ implies
	\begin{equation}\label{eq:zero-mean_E}
	    \mathbb{E}[ \gamma(X_t,t) |\mathcal{F}^{Y}_t ] = 0,  \quad\, 
	    \mathbb{E}[ \varepsilon (X_t,t) |\mathcal{F}^{Y}_t ] = 0
	\end{equation}
    a.s., which proves the last claim in the theorem.
    
    Additionally, we can use integration by parts in \cref{eq:match_dt}--\cref{eq:match_dY} to rewrite them only in terms of $\phi$ (but not its derivatives). If we further restrict ourselves to compactly supported test functions $\phi \in C^2_k$, boundary terms vanish and we formally have
    \begin{gather}
	   \int_{\mathbb{R}}  \phi(x) \Big[\mathcal{A}_t^{\dag}p(x,t) - \mathcal{L}_t^{\dag}p(x,t) +  (h(x,t) - \hat{h}_t) \hat{h}_t p(x,t) \Big] \, dx = 0 , \label{eq:formal_PDE_dt}\\
	   \int_{\mathbb{R}}  \phi(x) \Big[\mathcal{B}_t^{\dag}p(x,t) - (h(x,t) - \hat{h}_t)p(x,t)\Big] \, dx = 0 , \label{eq:formal_PDE_dY}
	\end{gather}
	where $\mathcal{A}^{\dag}_t$, $\mathcal{B}^{\dag}_t$, and $\mathcal{L}_t^{\dag}$ are the formal adjoints of $\mathcal{A}_t$, $\mathcal{B}_t$, and $\mathcal{L}_t$, respectively, with respect to the Lebesgue measure.
	Since these equations hold for all $\phi \in C^2_k$ and the terms inside the square brackets are continuous in $x$ (given the assumptions \ref{asmp:functions_S} and \ref{asmp:functions_W}), the fundamental lemma in the calculus of variations (see, e.g., \cite[Lemma 1]{Gelfand1963}) implies that the terms inside the square brackets are identically zero on $\mathbb{R}$, which gives two main equations of the theorem, as we will see soon.
    The remainder of the proof will be concerned with calculating the adjoint operators $\mathcal{A}^{\dag}_t$ and $\mathcal{B}^{\dag}_t$.
    
    The integrals we deal with have the general form of $\mathbb{E} [ l(X_t,t) \phi'(X_t) | \mathcal{F}^{Y}_t ]$ or $\mathbb{E} [ l(X_t,t) \phi''(X_t) | \mathcal{F}^{Y}_t ]$, where $l(x,t)$ is a $C^{1,0}$ or $C^{2,0}$ function, respectively. We have
    \begin{equation}
        \mathbb{E} [l(X_t,t) \phi'(X_t) | \mathcal{F}^{Y}_t ] = \int_{\mathbb{R}} \phi' l p \, dx = \phi l p \Big|_{-\infty}^{\infty} -  \int_{\mathbb{R}} \phi ( l p )'  \, dx = -  \int_{\mathbb{R}} \phi ( l p )'  \, dx,
    \end{equation}
    in which the boundary term is zero because $\phi$ has compact support. Similarly, we can use the integration by parts twice and write
    \begin{equation}
        \mathbb{E} [l(X_t,t) \phi''(X_t) | \mathcal{F}^{Y}_t ] = \int_{\mathbb{R}} \phi'' l p\,dx = - \int_{\mathbb{R}}  \phi' ( l p )' \, dx  =  \int_{\mathbb{R}} \phi (l p )'' \, dx.
    \end{equation}
    Using this technique, we have
    \begin{align}
    \mathbb{E}[ \mathcal{A}_t\phi(X_t) |\mathcal{F}^{Y}_t] &= \int_{\mathbb{R}} \phi \Big[ \underbrace{  \gamma p - (u p)' - (k \varepsilon p)' - (v \zeta p)' + \tfrac{1}{2} ( k^2 p + v^2 p)'' }_{\mathcal{A}_t^{\dag}p(x,t)} \Big]\, dx, \\
        \mathbb{E}[ \mathcal{B}_t\phi(X_t) |\mathcal{F}^{Y}_t] &= \int_{\mathbb{R}} \phi \Big[ \underbrace{ \varepsilon p - (k p)'}_{\mathcal{B}_t^{\dag}p(x,t)} \Big] \, dx.
    \end{align}
    Plugging $\mathcal{A}_t^{\dag}p(x,t)$ and $\mathcal{B}_t^{\dag}p(x,t)$ into \cref{eq:formal_PDE_dt}--\cref{eq:formal_PDE_dY} completes the proof.
\end{proof}

\subsection{Derivation of the class of particle filters}\label{prf:class}
\begin{proof}[Proof of \cref{prop:class}]
	Equation \cref{eq:PDE_dt} is a second-order ODE, consisting of three parts.
	A trivial solution to this equation can be obtained by setting each part to zero.
	Alternatively, one can use \cref{eq:PDE_dY} to first change the form of \cref{eq:PDE_dt} and then set each part to zero.
	Specifically, the second-order term $\smash{\tfrac{\partial^2}{\partial x^2} ( k^2 p)}$ in \cref{eq:PDE_dt} can be converted from the second-order derivative into the first-order derivative using \cref{eq:PDE_dY} as follows:
	\begin{equation}\label{eq:class_proof_1}
    	\tfrac{\partial^2}{\partial x^2} ( k^2 p) = \tfrac{\partial}{\partial x} \big(  k k' p  + k ( k p)' \big)  = \tfrac{\partial}{\partial x} \big( k ( k'  - h + \hat{h}_t + \varepsilon )  p \big).
	\end{equation}  
	Likewise, the zero-order term  $(h - \hat{h}_t)\hat{h}_t p$ in \cref{eq:PDE_dt} can yield a first-order derivative using \cref{eq:PDE_dY}:
	\begin{equation} \label{eq:class_proof_2}
	(h - \hat{h}_t )\hat{h}_t p = - \tfrac{\partial}{\partial x} ( k \hat{h}_t p ) +  \varepsilon \hat{h}_t p.
	\end{equation}
	To obtain a general expression, we may simply split
	\begin{align}
		\tfrac{\partial^2}{\partial x^2} ( k^2 p) & = \beta \tfrac{\partial^2}{\partial x^2} ( k^2 p)+(1-\beta)\tfrac{\partial^2}{\partial x^2} ( k^2 p),
		\\
		(h - \hat{h}_t ) \hat{h}_t p & = \alpha (h - \hat{h}_t ) \hat{h}_t p + (1-\alpha) (h - \hat{h}_t ) \hat{h}_t p,
	\end{align}
	where $\alpha,\beta \in \mathbb{R}$ are free parameters, which can be (continuous) functions of $t$, but not of $x$, because we would like to have the possibility to take them inside the derivatives in the next calculation steps.
	In the equations above, we let the first parts remain unchanged, while for the second parts, we use \cref{eq:class_proof_1} and \cref{eq:class_proof_2}.
	Equation \cref{eq:PDE_dt} then becomes
	\begin{multline}\label{eq:PDE_dt_alpha-beta}
	\smash{\frac{1}{2} \frac{\partial^2}{\partial x^2} \big( ( \beta k^2 + v^2 - g^2 ) p \big)  }\\
	 - \frac{\partial}{\partial x} \Big( \big( u - f + \frac{1}{2} k \big( (1-\beta) h + (1+\beta-2\alpha) \hat{h}_t + (1+\beta )\varepsilon -(1-\beta) k' \big) + v \zeta \big)  p \Big)  \\ \smash{  + \big( \alpha (h - \hat{h}_t)\hat{h}_t+ (1-\alpha) \varepsilon  \hat{h}_t + \gamma \big) p = 0.}
	\end{multline}
	In the equation above, by separately setting each of the three terms to zero, we obtain a particular solution.
	Furthermore, we wish to derive the results for a $\varepsilon$ that consists of an interpolation.
	As we discussed in \cref{subsec:class}, the choice of $\varepsilon$ that linearly interpolates between $h-\hat{h}_t$ and $0$ is a relevant choice for our purpose. We also let $\varepsilon$ consist of a free function (to not restrict ourselves to just an interpolation). So we take $\varepsilon = \eta (h-\hat{h}_t ) + \tilde{\varepsilon}$, where $\eta$ is a free parameter like $\alpha,\beta$, and $\tilde{\varepsilon}$ is an arbitrary function with zero mean under the posterior distribution (to guarantee the condition \cref{eq:zero-mean}).
	To sum up, plugging in $\varepsilon$ to the modified ODE \cref{eq:PDE_dt_alpha-beta} and setting each term to zero yield the results of this proposition.
\end{proof}

\subsection{Derivation of the particle distribution}\label{prf:q}
Here we first present a lemma that describes the evolution equation of $q(x,t)$ for a particle dynamics of the general form \cref{eq:HPF_dS}.
We then apply the result to the linear-Gaussian case in particular. This lemma will also be used later for the proof of \cref{prop:dW_optimal}.
\begin{lemma}[Proposition 1 in \cite{FPF2016}]\label{lemma:particle_distribution}
	Consider the stochastic process $S_t$ satisfying the SDE \cref{eq:HPF_dS} under the assumptions \ref{asmp:BM}--\ref{asmp:functions_S}. 
	The probability density function of $S_t$ given $\mathcal{F}^Y_t$, denoted by $q(x,t)$, evolves as the following SPDE:
	\begin{equation}\label{eq:dq}
	    dq(x,t) = \mathcal{J}^{\dagger}_t q(x,t)\,dt + \mathcal{K}_t^{\dagger} q(x,t) \, dY_t ,
	\end{equation}
    where $\mathcal{J}_t^{\dagger}\cdot := - \frac{\partial}{\partial x}(u(x,t)\cdot) + 
    \frac{1}{2} \frac{\partial^2}{\partial x^2} (k^2(x,t) \cdot+v^2(x,t) \cdot)$ and $\mathcal{K}_t^{\dagger} \cdot := - \frac{\partial}{\partial x} (k(x,t) \cdot )$.
\end{lemma}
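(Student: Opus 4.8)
The plan is to mimic the classical derivation of the Fokker--Planck equation, adapted to the conditional setting in which the observation $Y$ is treated as a given (conditioning) process that simultaneously drives $S_t$. Fix a test function $\phi\in C^2_k$. First I would apply It\^o's formula to $\phi(S_t)$ using \cref{eq:HPF_dS}; since the observation noise $B^{\text{\tiny Y}}$ is a standard Brownian motion we have $(dY_t)^2=dt$, and $dY_t\,dB_t=0$ by \ref{asmp:BM}, so the quadratic--variation contribution of the $k\,dY_t$ term combines with that of $v\,dB_t$, yielding
\[ d\phi(S_t) = \phi'(S_t)\big( u\,dt + k\,dY_t + v\,dB_t \big) + \tfrac12 \phi''(S_t)\big( k^2 + v^2 \big)\,dt . \]

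Next I would pass to the integral form, take the conditional expectation with respect to $\mathcal{F}^{Y}_t$, and interchange this conditional expectation with the time integral and with the stochastic integral against $dY$; this is precisely the $W_t\equiv 1$ instance of \cref{lemma:interchange_integrals} (equivalently Lemma 2 in \cite{FPF2011}), which also annihilates the $dB$-integral term. Writing $\mathbb{E}[\psi(S_s)\mid\mathcal{F}^{Y}_s] = \int_{\mathbb{R}}\psi(x)\,q(x,s)\,dx$ and then transferring the derivatives from $\phi$ onto $q$ by integration by parts (once for the $u$- and $k$-terms, twice for the $(k^2+v^2)$-term, with vanishing boundary contributions since $\phi$ has compact support) gives, in differential form,
\[ d\!\int_{\mathbb{R}} \phi\, q\,dx = \Big( \int_{\mathbb{R}} \phi\, \mathcal{J}^{\dagger}_t q\,dx \Big)\,dt + \Big( \int_{\mathbb{R}} \phi\, \mathcal{K}^{\dagger}_t q\,dx \Big)\,dY_t , \]
with $\mathcal{J}^{\dagger}_t$ and $\mathcal{K}^{\dagger}_t$ as in the statement. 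Matching, as in the proof of \cref{thm:unifying}, the coefficients of $dt$ and of $dB^{\text{\tiny Y}}_t$ on the two sides (which is legitimate because $Y$ has a non-degenerate martingale part), and then invoking the fundamental lemma of the calculus of variations (Lemma 1 in \cite{Gelfand1963}) together with the continuity in $x$ of the bracketed expressions under \ref{asmp:functions_S}, yields the SPDE \cref{eq:dq} pointwise in $x$. It is worth noting that this computation is exactly the $W_t\equiv 1$, $\gamma=\varepsilon=\zeta=0$ specialization of Steps (a)--(b) in the proof of \cref{thm:unifying}: under those substitutions $\mathcal{A}^{\dagger}_t$ reduces to $\mathcal{J}^{\dagger}_t$ and $\mathcal{B}^{\dagger}_t$ to $\mathcal{K}^{\dagger}_t$.

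The main obstacle is not the formal manipulation but the regularity input it presupposes: one needs to know a priori that the conditional law of $S_t$ admits a density $q(\cdot,t)$ smooth enough (and with sufficient decay at infinity) for the integrations by parts and the fundamental-lemma step to be valid, and that the relevant integrands lie in $\mathbb{L}^2(0,t)$ so that \cref{lemma:interchange_integrals} applies. Under \ref{asmp:BM}--\ref{asmp:functions_S} the $\mathbb{L}^2$ bounds are immediate, since $\phi'$ and $\phi''$ are bounded with compact support while $u,k,v$ are continuous; the existence and smoothness of $q$ follow from standard parabolic regularity for the conditionally linear SPDE (using non-degeneracy of $k^2+v^2$), or may simply be imported from Proposition 1 of \cite{FPF2016}, which is what we do here.
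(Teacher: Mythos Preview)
The paper does not prove this lemma itself; it is stated as Proposition~1 of \cite{FPF2016} and used without further argument. Your proposal therefore supplies a derivation the paper omits, and it is correct: as you observe, it is precisely the $W_t\equiv 1$, $\gamma=\varepsilon=\zeta=0$ specialization of Steps~(a)--(b) in the proof of \cref{thm:unifying}, with $\mathcal{A}^{\dagger}_t$ and $\mathcal{B}^{\dagger}_t$ collapsing to $\mathcal{J}^{\dagger}_t$ and $\mathcal{K}^{\dagger}_t$. You also correctly isolate the one genuine issue---the a priori existence and $C^2$-regularity of the conditional density $q$ needed for the integration by parts and the fundamental-lemma step---and sensibly defer it to the cited source. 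One small phrasing point: your ``matching the coefficients of $dt$ and $dB^{\text{\tiny Y}}_t$ on the two sides'' is slightly misleading here, since unlike in \cref{thm:unifying} there is no externally given equation (such as the KSE) to compare against; what you are actually doing is passing from the weak identity $d\!\int\phi\,q = \int\phi\,\mathcal{J}^{\dagger}_t q\,dx\,dt + \int\phi\,\mathcal{K}^{\dagger}_t q\,dx\,dY_t$ to the pointwise SPDE, and that passage rests exactly on the regularity of $q$ you flag at the end.
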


\begin{proof}[Proof of statement \cref{eq:particle_mean}--\cref{eq:particle_var}]
    After setting $\tilde{\varepsilon}=0$ and $\zeta=0$ in \cref{prop:class} for the linear-Gaussian setting, the coefficients of the particle dynamics are given by
        \begin{align}
            k(t) & = (1-\eta) c \hat{\rho}_t, \label{eq:subclass_k_linear}\\
    	    v^2(t) & = b^2 - \beta (1-\eta)^2 c^2 \hat{\rho}_t^2, \label{eq:subclass_v_linear} \\
    	    u(x,t) & =  ax - \tfrac{1}{2} (1-\eta) c^2 \hat{\rho}_t (\vartheta_1 x + \vartheta_2  \hat{\mu}_t ), \label{eq:subclass_u_linear}
    	\end{align}
    	where \cref{eq:subclass_k_linear} is obtained by solving \cref{eq:PDE_dY}.
    	Due to the linearity and initial Gaussian distribution, the particle distribution will stay Gaussian, $S_t|\mathcal{F}^Y_t \sim \mathcal{N}(\mu_t,\rho_t)$. 
    	Hence, it only remains to find the evolution of the mean $\mu_t = \mathbb{E}[S_t | \mathcal{F}^{Y}_t] $ and variance $\rho_t = \text{Var}[S_t | \mathcal{F}^{Y}_t]$.
    	Using integration by parts, we can derive from \cref{eq:dq} that
    	\begin{align}
    	d\mathbb{E}[S_t | \mathcal{F}^{Y}_t] & = 
    	\mathbb{E} [u(S_t,t) | \mathcal{F}^{Y}_t]\,dt +
    	\mathbb{E} [ k (S_t,t) | \mathcal{F}^{Y}_t ] \, dY_t, \label{eq:dE[S|F]}\\
    	d\text{Var}[S_t | \mathcal{F}^{Y}_t]  & = 
    	\mathbb{E} [(S_t- \mathbb{E}[S_t | \mathcal{F}^{Y}_t])u(S_t,t) + v^2(S_t,t) | \mathcal{F}^{Y}_t]\,dt \label{eq:dVar[S|F]} \\
    	& + \text{Var} [k(S_t,t) | \mathcal{F}^{Y}_t]\,dt 
    	+  \mathbb{E} [(S_t- \mathbb{E}[S_t | \mathcal{F}^{Y}_t])k(S_t,t) | \mathcal{F}^{Y}_t]\,dY_t. \nonumber
    	\end{align}
    	Substituting \cref{eq:subclass_k_linear}--\cref{eq:subclass_u_linear} into \cref{eq:dE[S|F]}--\cref{eq:dVar[S|F]} yields the result.
\end{proof}

\subsection{Derivation of the optimal weight dynamics}\label{prf:dW_optimal}
\begin{proof}[Proof of \cref{prop:dW_optimal}]
    The goal is to obtain the stochastic differential $\smash{d(\tfrac{p(S_t,t)}{q(S_t,t)})}$, for which we now know that $p(x,t)$ and $q(x,t)$ satisfy the SPDEs \cref{eq:KSE_dis} and \cref{eq:dq}, respectively, and $S_t$ solves the SDE \cref{eq:HPF_dS}. 
    As a first step, we calculate by Itô's lemma
    \begin{equation}
        d\big(\tfrac{p(x,t)}{q(x,t)}\big) = \Lambda_1 (x,t) \, dt + \Lambda_2 (x,t) \, dY_t,
    \end{equation}
    where the functions $\{ \Lambda_1 (x,t), \Lambda_2 (x,t) \}$ are given by
    \begin{align}
        \Lambda_1 & := \tfrac{p}{q} \big( \tfrac{1}{p} \mathcal{L}_t^{\dagger}p - (h - \hat{h}_t) \hat{h}_t - \tfrac{1}{q} \mathcal{J}^{\dagger}_t q - \tfrac{1}{q} (h - \hat{h}_t) \mathcal{K}_t^{\dagger} q  + \tfrac{1}{q^2} (\mathcal{K}_t^{\dagger} q )^2 \big) , \\
        \Lambda_2 & := \tfrac{p}{q} \big( (h - \hat{h}_t ) - \tfrac{1}{q} \mathcal{K}_t^{\dagger} q \big).
    \end{align}
    Next, to calculate $\smash{d(\tfrac{p(S_t,t)}{q(S_t,t)})}$, the Kunita--Itô--Wentzell formula can be used (see \cite[Theorem 1.1]{Kunita1981}, and set $F_t(x) = \tfrac{p(x,t)}{q(x,t)}$, $M_t = S_t$ as continuous semimartingales). We get
    \begin{equation}
        d\big(\tfrac{p(S_t,t)}{q(S_t,t)}\big) = \Upsilon_1 (S_t,t) \, dt + \Upsilon_2 (S_t,t) \, dY_t + \Upsilon_3 (S_t,t) \, dB_t,
    \end{equation}
    where the functions $\{ \Upsilon_1 (x,t), \Upsilon_2 (x,t), \Upsilon_3 (x,t) \}$ are given by
    \begin{equation} \label{eq:Upsilon}
        \Upsilon_1 := \Lambda_1 + \mathcal{J}_t \tfrac{p}{q} +  \mathcal{K}_t \Lambda_2, \quad
        \Upsilon_2 := \Lambda_2 + \mathcal{K}_t \tfrac{p}{q}, \quad
        \Upsilon_3 := \mathcal{V}_t \tfrac{p}{q},
    \end{equation}
    with operators $\mathcal{J}_t \cdot := u(x,t) \frac{\partial}{\partial x} \cdot +  \frac{1}{2} (k^2(x,t) +v^2(x,t) ) \frac{\partial^2}{\partial x^2} \cdot $ and  $\mathcal{K}_t \cdot := k(x,t) \frac{\partial}{\partial x} \cdot $ whose adjoints have been already introduced in \cref{lemma:particle_distribution}.
    We have also defined the operator $\mathcal{V}_t \cdot := v(x,t) \frac{\partial}{\partial x} \cdot $.
    It is easy to see that $\Upsilon_3 = \tfrac{p}{q} (  v (\log \tfrac{p}{q})') = : \tfrac{p}{q} \zeta^*$, which proves \cref{eq:zeta^*}.
    Further, it can be verified that $\Upsilon_2$ and $\Upsilon_1$ correspond to $\tfrac{p}{q} \varepsilon^*$ and $\tfrac{p}{q} \gamma^*$, respectively, where $\varepsilon^*$ and $\gamma^*$ are $P_t$-a.s. unique solutions to \cref{eq:PDE_dY}--\cref{eq:PDE_dt} after setting $\zeta= \zeta^*$.
    The calculation steps are, however, omitted on account of space.
    Finally, to show that $\varepsilon^*,\gamma^*$ also satisfy \cref{eq:zero-mean}, it is easy to see that the optimal weight satisfies $\mathbb{E}[W_t^*|\mathcal{F}^{Y}_t]=1$ a.s., which leads to the same result as shown in \cref{eq:zero-mean_E}.
\end{proof}

\subsection{Derivation of the Fisher-optimal constant gain approximation}\label{prf:gain}
\begin{proof}[Proof of \cref{prop:optimal_constant_gain}]
    Given the definition of $\psi(x,t)$, the objective functional $\mathcal{I}[{\bar{k}}]$ can be written as
	\begin{multline}
	    \mathbb{E}[ {\varepsilon}^2(X_t,t) | \mathcal{F}^{Y}_t] = \mathbb{E}[ (h(X_t,t) - \hat{h}_t)^2 | \mathcal{F}^{Y}_t] \\ +  \bar{k}(t)^2 \mathbb{E}[ \psi^2(X_t,t) | \mathcal{F}^{Y}_t]  
	    - 2 \bar{k}(t) \mathbb{E}[ h'(X_t,t) | \mathcal{F}^{Y}_t] \quad a.s.
	\end{multline}
	where we used integration by parts for the last term and the boundedness of $h$.
	For a fixed $t$ and $p$, we take the derivative of the objective functional with respect to $\bar{k}(t)$:
	\begin{equation}\label{eq:1st_derivative_functional}
	\frac{d}{d\bar{k}}\mathbb{E}[ {\varepsilon}^2(X_t,t) | \mathcal{F}^{Y}_t] = 2 \bar{k}(t) \mathbb{E}[ \psi^2(X_t,t) | \mathcal{F}^{Y}_t] - 2 \mathbb{E}[ h'(X_t,t) | \mathcal{F}^{Y}_t] \quad a.s.
	\end{equation}
	Furthermore, observe that the second derivative is a.s. positive:
	\begin{align}
	\frac{d^2}{d\bar{k}^2}\mathbb{E}[ {\varepsilon}^2(X_t,t) | \mathcal{F}^{Y}_t] = 2 \mathbb{E}[ \psi^2(X_t,t) | \mathcal{F}^{Y}_t] > 0 \quad a.s.
	\end{align}
	Thus, the minimizer $\bar{k}^*(t)$ is found by setting the first derivative \cref{eq:1st_derivative_functional} to zero.
\end{proof}

\subsection{Derivation of weight degeneracy dynamics}\label{prf:dEW2}
\begin{lemma}[Proposition 2.30. in \cite{Bain2009}]\label{lemma:innovation_process}
	Consider the filtering problem \cref{eq:hidden}--\cref{eq:observation}. The innovation process $I_t$, which evolves according to the SDE $ dI_t = dY_t - \hat{h}_t dt$, is an $\mathcal{F}_t^Y$-adapted BM if $h(X_{\cdot},\cdot) \in \mathbb{L}^1(0,t)$ holds.
\end{lemma}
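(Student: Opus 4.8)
The plan is to establish that $I_t$ is an $\mathcal{F}^Y_t$-Brownian motion by verifying the hypotheses of Lévy's characterization theorem: namely, that $I_t$ is a continuous $\mathcal{F}^Y_t$-martingale started at $0$ with quadratic variation $\langle I \rangle_t = t$. Writing the observation equation \cref{eq:observation} in the form $dY_t = h(X_t,t)\,dt + dB^{\text{\tiny Y}}_t$, the defining relation $dI_t = dY_t - \hat h_t\,dt$ integrates to
\begin{equation}
    I_t = B^{\text{\tiny Y}}_t + \int_0^t \big( h(X_s,s) - \hat h_s \big)\,ds .
\end{equation}
Adaptedness and continuity would then be immediate: $Y_t$ is $\mathcal{F}^Y_t$-measurable and continuous, while $\hat h_s = \mathbb{E}[h(X_s,s)\mid\mathcal{F}^Y_s]$ is $\mathcal{F}^Y_s$-measurable, so the time integral is continuous and $\mathcal{F}^Y_t$-adapted, and $I_0 = 0$.

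The substance of the argument is the martingale property, which is where the integrability hypothesis $h(X_\cdot,\cdot)\in\mathbb{L}^1(0,t)$ enters. For $s<t$ I would split $I_t - I_s = (B^{\text{\tiny Y}}_t - B^{\text{\tiny Y}}_s) + \int_s^t \big( h(X_r,r)-\hat h_r \big)\,dr$ and handle the two pieces separately. Since $B^{\text{\tiny Y}}$ is a Brownian motion with respect to the full filtration $\mathcal{F}_t$ underlying the filtering problem \cref{eq:hidden}-\cref{eq:observation}, its increment $B^{\text{\tiny Y}}_t - B^{\text{\tiny Y}}_s$ is independent of $\mathcal{F}_s \supseteq \mathcal{F}^Y_s$, so the tower property gives $\mathbb{E}[B^{\text{\tiny Y}}_t - B^{\text{\tiny Y}}_s \mid \mathcal{F}^Y_s] = 0$. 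For the drift piece, I would invoke the conditional Fubini theorem (legitimate precisely because $h\in\mathbb{L}^1$) to pull the conditional expectation inside the time integral, and then for each fixed $r \geq s$ use the tower property across $\mathcal{F}^Y_s \subseteq \mathcal{F}^Y_r$ together with the definition of $\hat h_r$ to obtain $\mathbb{E}[h(X_r,r)\mid\mathcal{F}^Y_s] = \mathbb{E}[\hat h_r \mid \mathcal{F}^Y_s]$. This makes the conditional expectation of $\int_s^t h(X_r,r)\,dr$ cancel exactly against that of $\int_s^t \hat h_r\,dr$, yielding $\mathbb{E}[I_t - I_s \mid \mathcal{F}^Y_s] = 0$.

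Finally, I would observe that the drift term $\int_0^t (h-\hat h)\,ds$ is continuous and of finite variation, hence contributes nothing to the quadratic variation; therefore $\langle I\rangle_t = \langle B^{\text{\tiny Y}}\rangle_t = t$. With continuity, the martingale property, $I_0 = 0$, and $\langle I\rangle_t = t$ all established, Lévy's characterization theorem delivers the conclusion. The most delicate step is the martingale verification, specifically justifying the interchange of the conditional expectation with the time integral and the passage through the intermediate $\sigma$-algebra $\mathcal{F}^Y_r$; the hypothesis $h(X_\cdot,\cdot)\in\mathbb{L}^1(0,t)$ is exactly what guarantees that $\hat h$ is well defined, that $I_t$ is integrable, and that conditional Fubini applies, so that $I_t$ is a genuine martingale rather than merely a local one.
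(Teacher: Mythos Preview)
The paper does not supply its own proof of this lemma; it is quoted verbatim as Proposition~2.30 from \cite{Bain2009} and used as a black box in the proof of \cref{prop:weight_degeneracy}. Your proposal reconstructs precisely the standard argument found in that reference: L\'evy's characterization, with the martingale property established by splitting $I_t - I_s$ into the Brownian increment (handled via independence from $\mathcal{F}_s \supseteq \mathcal{F}^Y_s$) and the compensated drift (handled via conditional Fubini and the tower property through $\mathcal{F}^Y_r$), and the quadratic variation read off from the finite-variation nature of the drift correction. The argument is correct and matches the cited source in structure and detail.
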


\begin{proof}[Proof of \cref{prop:weight_degeneracy}]
    Plug $dY_t = dI_t + \hat{h}_t dt$ into SDEs \cref{eq:dW^2}--\cref{eq:dlogW}, write them in integral form, and take the expectation of both sides.
    In addition to $B_t$, the innovation process $I_t$ is a BM by \cref{lemma:innovation_process}.
    Hence, terms that involve $B_t$ and $I_t$ vanish due to the fact that the expectation of Itô's Integral is zero.
    Finally, applying Fubini’s theorem (see, e.g., \cite[Theorem 2.39]{Klebaner2005}) for the remaining terms and turning the result back into the differential form give the results. 
\end{proof}

\vspace{1pt}

\bibliographystyle{siamplain}
\bibliography{ms}

\end{document}